\newtheorem{theorem}{Theorem}[section]
\newtheorem{lemma}{Lemma}[section]
\newtheorem{corollary}{Corollary}[section]
\newtheorem{claim}{Claim}[section]
\newtheorem{conjecture}{Conjecture}[section]
\newcommand{\qed}{\hfill\rule{0.5em}{0.809em}}
\def\emptyset{\mbox{{\rm \O}}}
\newenvironment{proof}{
	\noindent {\bf Proof.}\rm}
{\mbox{}\hfill\rule{0.5em}{0.809em}\par}
\def\qed{\hfill \rule{0.5em}{0.809em}}
\begin{document}
	
	\title{On minimal nonperfectly weight divisible fork-free graphs}
	
	\author{Baogang  Xu\footnote{Email: baogxu@njnu.edu.cn. Supported by 2024YFA1013902},  \; Miaoxia Zhuang\footnote{Corresponding author. Email: 19mxzhuang@alumni.stu.edu.cn }\\\\
		\small Institute of Mathematics, School of Mathematical Sciences\\
		\small Nanjing Normal University, 1 Wenyuan Road,  Nanjing, 210023,  China}
	\date{}

\maketitle

\begin{abstract}
A fork is a graph obtained from $K_{1,3}$ (usually called claw) by subdividing an edge once. A graph $G$ is perfectly weight divisible if for every positive integral weight function on $V(G)$ and each of its induced subgraph $H$, $V(H)$ can be partitioned into $A$ and $B$ such that $H[A]$
is perfect and the maximum weight of
a clique in $H[B]$ is smaller than the maximum weight of a clique in $H$. In this paper, we prove that the perfect weight divisibility of fork-free graphs is equivalent to that of claw-free graphs. We also prove that, for $F\in \{P_7, P_6\cup K_1\}$, each (fork, $F$)-free graph $G$ is perfectly weight divisible and hence $\chi(G)\leq \binom{\omega(G)+1}{2}$.

\begin{flushleft}
{\em Key words and phrases:} fork-free, perfect weight divisibility, chromatic number, clique number\\
{\em 2020 Mathematics Subject Classification:}  05C15, 05C75\\
\end{flushleft}
	
\end{abstract}

\section{Introduction}
All graphs considered in this paper are finite and simple. Let $G$ be a graph, $v\in V (G)$, and let $X$ and $Y$ be two
subsets of $V (G)$. As usual, we use $N_G(v)$ to denote the set of neighbors of $v$ in $G$, and let $N_G(X)=(\cup_{x\in X}N_G(x))\setminus X$ and $M_G(X)=V(G)\setminus (N_G(X)\cup X)$. If it does not cause any confusion, we usually omit the subscript $G$ and simply write $N(v)$, $M(v)$, $N(X)$, $M(X)$. We say that $v$ is {\em complete} to $X$ if $X\subseteq N(v)$, and say that $v$ is {\em anticomplete} to $X$ if
$X\cap N(v)=\emptyset$. We say that $X$ is complete (resp. anticomplete) to $Y$ if each vertex of X is complete
(resp. anticomplete) to $Y$. If $1 \textless |X| \textless |V (G)|$ and every vertex of $V(G) \setminus X$ is either complete or anticomplete to $X$, then we
call $X$ a {\em homogeneous set}. We use $G[X]$ to denote the subgraph of $G$ induced by $X$, and call $X$ a \textit{clique} if $G[X]$ is a complete
graph. The \textit{clique number} $\omega(G)$ of $G$ is the maximum size taken over all cliques of $G$. For $u,v\in V(G)$, we simply write $u\sim v$ if $uv\in E(G)$ and write $u\not\sim v$ otherwise. For brevity, we let \(N_X(Y)\) (resp. \(M_X(Y)\)) denote \(N_{G[X]}(Y)\) (resp. \(M_{G[X]}(Y)\)).

For an integer $k$, we say that $G$ is $k$-colorable if there is a mapping $\phi$ from $V(G)$ to $\{1,2,...,k\}$ such that $\phi(u)\neq \phi(v)$ whenever $u\sim v$. The chromatic number $\chi(G)$ of $G$ is the smallest integer $k$ such that $G$ is $k$-colorable.

We say that $G$ \textit{induces} $H$ if $G$ has an induced
subgraph isomorphic to $H$, and say that $G$ is \textit{{\em $H$}-free} otherwise. Analogously, for a family ${\cal H}$ of graphs, we say that $G$ is
\textit{${\cal H}$-free} if $G$ induces no member of ${\cal H}$.

A family ${\cal G}$ of graphs is said to be \textit{$\chi$-bounded} if there is a function $f$ such that $\chi(G) \leq f (\omega(G))$ for every $G\in {\cal G}$, and if
such a function does exist for ${\cal G}$, then $f$ is said to be a \textit{binding function} of ${\cal G}$\cite{G1975}. In addition, if $f$ is a
polynomial function then the class ${\cal G}$ is \textit{polynomially $\chi$-bounded}. It has long been known that there are hereditary graph classes that are not $\chi$-bounded (see \cite{CK2024,SR2019,SS2020} for more results and problems on this topic),
 and it is also known that there are hereditary graph classes that are $\chi$-bounded but not
polynomially $\chi$-bounded\cite{BDW2024}.

Let $G$ be a graph, $h$ be a positive integral weight function on $V(G)$ and $X\subseteq V(G)$. We use {\em $\omega_h(G)$} to denote the maximum weight of cliques in $G$ and simply write $\omega_h(G[X])$ as $\omega_h(X)$. Denote $h|_X$ to be a weight function induced on $X$ by $h$. If $\chi(H)=\omega(H)$ for each of its induced subgraph $H$, then $G$ is called a {\em  perfect graph}. An $h$-{\em perfect division} $(A, B)$ of $G$ is a partition of $V(G)$ into $A$ and $B$ such that $G[A]$ is perfect and $\omega_h(B) < \omega_h(G)$ \cite{H2018}. A graph $G$ is $h$-{\em perfectly divisible} if every induced subgraph $H$ of $G$ has an   $h|_{V(H)}$-perfect division. We abbreviate $h$-perfectly divisible as {\em perfectly divisible} if $h$ is the all-ones weight function.  A graph $G$ is {\em perfectly weight divisible} if $G$ is perfectly divisible for every positive integral weight function.



A \textit{hole} of $G$ is an induced cycle of length at least 4, and a $k$-hole is a
hole of length $k$. A $k$-hole is called an \textit{odd hole} if $k$ is odd, and is called an \textit{even hole}
otherwise. An \textit{antihole} is the complement of some hole. An \textit{odd (resp. even) antihole}
is defined analogously. A \textit{fork} is the graph obtained from $K_{1,3}$ (usually called claw) by subdividing an edge once. The class of claw-free graphs is a subclass of fork-free graphs. It is known that there is no linear binding function even for $(3K_1, 2K_2)$-free graphs \cite{BRSV19}, which is a special subclass of claw-free graphs. Kim \cite{K1995} showed that the Ramsey number $R(3,t)$ has order of magnitude $O(\frac{t^2}{logt})$, and thus
for any claw-free graph $G$, $\chi(G) \leq O( \frac{\omega^2(G)}{log\omega(G)})$. Liu {\em et al.} \cite{Yu} proved
that $\chi(G) \leq 7\omega^2(G)$ for every fork-free graph $G$.  Chudnovsky and Seymour proved in \cite{CS2010} that every connected claw-free graph
$G$ with a stable set of size at least three satisfies $\chi(G) \leq 2\omega(G)$.

The authors of \cite{KKS2022} mentioned a conjecture of Sivaraman claiming that fork-free graphs are perfectly divisible.
\begin{conjecture}\label{fork}{\em\cite{KKS2022}}
The class of fork-free graphs is perfectly divisible. 
\end{conjecture}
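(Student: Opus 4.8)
\noindent The plan is to \emph{reduce} Sivaraman's conjecture to the corresponding statement for claw-free graphs. This is natural because every fork contains an induced claw (the centre of the underlying $K_{1,3}$ together with its three neighbours in the fork form a $K_{1,3}$), so the claw-free graphs are precisely a subclass of the fork-free graphs. Hence the easy implication is trivial: if all fork-free graphs were perfectly divisible, then so would be all claw-free graphs. The content I would aim to establish is the converse --- that perfect divisibility of \emph{all} claw-free graphs implies perfect divisibility of all fork-free graphs; this isolates the single genuine source of difficulty (the presence of claws) from the rest of the fork-free structure.

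\noindent To prove the reduction I would argue by minimal counterexample, \emph{assuming} every claw-free graph is perfectly divisible. Let $G$ be a fork-free graph with the fewest vertices admitting no perfect division. First, $G$ has no nontrivial homogeneous set: if $X$ were one with $1<|X|<|V(G)|$, then both $G[X]$ and the graph obtained by contracting $X$ to a single vertex are smaller fork-free graphs, hence perfectly divisible by minimality, and a substitution lemma (perfect divisibility is preserved under substituting one perfectly divisible graph into a vertex of another) would make $G$ perfectly divisible, a contradiction. Second, $G$ must contain an induced claw, say with centre $v$ and pairwise non-adjacent leaves $a_1,a_2,a_3\in N(v)$; otherwise $G$ is claw-free and perfectly divisible by assumption. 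The crucial leverage is the fork constraint: no vertex $u$ can satisfy $u\sim a_i$, $u\not\sim v$, and $u\not\sim a_j$ for the two indices $j\neq i$, since then $\{v,a_1,a_2,a_3,u\}$ would induce a fork.

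\noindent Iterating this constraint across all claws of $G$ should force the vertices witnessing claws to attach to the rest of $G$ in a restricted, ``blocky'' fashion; together with the absence of nontrivial homogeneous sets, I expect this to pin down $N(v)$ (or a comparable neighbourhood) as inducing a perfect, well-structured graph that can serve as the perfect part $A$ of a division whose complementary part $B$ has strictly smaller clique number --- contradicting the choice of $G$. The main obstacle, and the step I expect to resist a complete argument, is the claw-free \emph{base case} itself: that every claw-free graph is perfectly divisible. This is a hard statement about a class that is only known to be polynomially $\chi$-bounded, and a full proof would plausibly require the Chudnovsky--Seymour structure theorem for claw-free graphs, decomposing them into line graphs, graphs of bounded stability number, and their fuzzy compositions, followed by a verification that perfect divisibility is preserved by these operations.

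\noindent Short of settling the base case, the realistic targets are the two partial results: for $F\in\{P_7,\,P_6\cup K_1\}$, every $(\text{fork},F)$-free graph is perfectly divisible. Here the extra forbidden subgraph caps the length of induced paths, and I would exploit this to find, in each induced subgraph $H$, a perfect \emph{transversal} of the maximum cliques --- a set $A$ with $H[A]$ perfect that meets every clique of size $\omega(H)$, so that $\omega(H[V(H)\setminus A])<\omega(H)$. The bounded path length severely limits the distance levels around a well-chosen vertex as well as the holes and antiholes that can occur, and a case analysis on these levels, combined with the fork constraint above, should let me assemble such an $A$. Finally, perfect divisibility yields $\chi(G)\leq\binom{\omega(G)+1}{2}$ by the standard induction: the perfect part needs $\omega(G)$ colours, and since the remaining part has clique number at most $\omega(G)-1$ it needs at most $\binom{\omega(G)}{2}$ further colours, for a total of $\omega(G)+\binom{\omega(G)}{2}=\binom{\omega(G)+1}{2}$.
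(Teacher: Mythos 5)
You were asked to prove Conjecture~\ref{fork}, but be aware that the paper does not prove it either: it is quoted from \cite{forkfreeconjecture} and remains open (the paper explicitly notes it is unknown even for claw-free graphs). What the paper actually establishes is precisely the program you outline: Theorem~\ref{miniclaw} (every minimal nonperfectly divisible fork-free graph is claw-free, i.e.\ your reduction of the fork-free case to the claw-free case) together with Theorems~\ref{P_7} and \ref{hammer} (the two partial results), and the $\binom{\omega+1}{2}$ bound by the induction you correctly describe. So at the level of strategy your proposal matches the paper's contribution, and your flagging of the claw-free base case as the step that will resist proof is accurate --- no complete proof of the conjecture should be expected from you or from the paper.

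That said, your sketch of the reduction has two genuine gaps. First, the partition is inverted: if the perfect side of the division is to come from a claw center $v$, it must be $\{v\}\cup M(v)$, where $M(v)=V(G)\setminus N[v]$ is the \emph{non}-neighbourhood; then $B=N(v)$ automatically satisfies $\omega(G[B])<\omega(G)$, since any clique of $N(v)$ extends by $v$. Taking $A=N(v)$, as you propose, gives no control at all on the clique number of the complementary part, which contains $v$ and $M(v)$. Second, ``iterating the fork constraint across all claws'' does not by itself force the structure you hope for. The paper's actual proof that $G[M(v)]$ is perfect for every claw center $v$ (Lemma~\ref{4.222}) needs substantial machinery: an odd hole $C$ inside $G[M(v)]$ (whose existence uses Lemma~\ref{2.3}, resting on the strong perfect graph theorem), the classification of $N(M(C))$ into balloon centers $U$ and parachute centers $U'$ (Lemma~\ref{2.4}), a proof that minimal nonperfectly divisible fork-free graphs contain no odd parachute (Lemma~\ref{minipara2}, which itself needs the clique-number bound of Lemma~\ref{2.20} and the homogeneous-set lemma), and finally an induction on distance levels $Y_i$ inside $M(C)$ showing that no vertex of $M(C)$ can be a claw center. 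None of this is visible from the single local claw/fork interaction you describe. Likewise, for the partial results the paper does not construct a ``perfect transversal of maximum cliques'': it finds a balloon center mixed on a component of $M(C)$ (Lemma~\ref{hp}), shows the extra forbidden subgraph ($P_7$ or $P_6\cup K_1$) makes such mixing impossible or makes $M(C)$ a clique, and builds the perfect side as $U_i\cup M(C)\cup M(U_i\cup M(C))$, with Lemma~\ref{2.20} supplying $\omega(N(U_i\cup M(C)))<\omega(G)$. Your proposal would need to be rebuilt along these lines before any part of it becomes a proof.
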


This conjecture is not even known to be true for claw-free graphs.

Let $G$ and $H$ be two vertex disjoint graphs. We use $G \cup H$ to denote a graph with vertex set $V (G) \cup (H)$ and edge set $E(G) \cup E(H)$, and use $G + H$ to denote a graph with vertex set $V (G) \cup V (H)$ and edge set $E(G) \cup E(H) \cup \{xy | x \in V (G), y \in
V (H)\}$. A \textit{triad} is a stable set with cardinality three. A \textit{dart} is the graph $K_1 + (K_1 \cup  P_3)$, a \textit{banner} is a graph consisting of a $C_4$ with one pendant edge, a \textit{paw} is a graph
obtained from $K_{1,3}$ by adding an edge joining two of its leaves, a \textit{co-dart} is the union of $K_1$ and a paw, a \textit{bull} is a graph
consisting of a triangle with two disjoint pendant edges, a \textit{diamond} is the graph $K_1$ + $P_3$, a \textit{co-cricket} is the union of $K_1$
and a diamond. A {\em balloon} is a graph obtained from a hole $H$ and an additional edge $xy$ such that $x$ has exactly two consecutive neighbors in $H$ and $y$ is anticomplete to $V(H)$. An \textit{$i$-balloon} is a balloon such that its hole has $i$ vertices. An $i$-balloon is
called an \textit{odd balloon} if $i$ is odd. A \textit{parachute} is a graph obtained from a hole $H$ by adding an edge $uv$ such that $u$ is
complete to $V(H)$ and $v$ is anticomplete to $V(H)$. An \textit{$i$-parachute} is a parachute such that its hole has $i$ vertices. An $i$-parachute is called an \textit{odd parachute} if $i$ is odd. See Fig. \ref{fig-1} for these configurations.
\begin{figure}[htbp]
	\begin{center}
		\includegraphics[width=8cm]{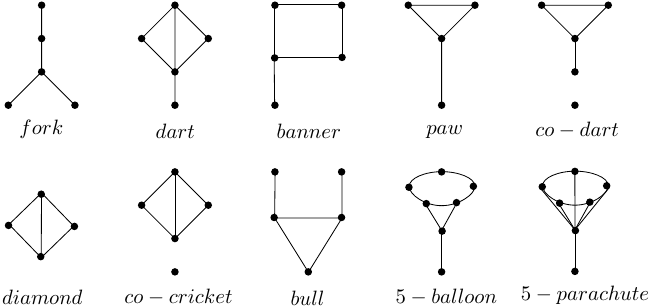}
	\end{center}
	\vskip -25pt
	\caption{Illustration of  fork and some forbidden configurations.}
	\label{fig-1}
\end{figure}

Karthick {\em et al.} \cite{KKS2022} proved that Conjecture \ref{fork} is true on (fork, $F$)-free graphs when $F \in$ \{$P_6$, co-dart,bull\}. They also showed that every $G$ in the classes of (fork, $H$)-free graphs, when $H \in$ \{dart, banner, co-cricket\}, satisfies $\chi(G)\leq \omega^2(G)$ . Wu and Xu\cite{Wu2024} proved that (fork, odd balloon)-free graphs are perfectly divisible (they in fact proved that (fork, odd balloon)-free graphs are perfectly weight divisible). 

Let $G$ be a nonperfectly weight divisible graph. If each of the proper induced subgraphs of $G$ is perfectly weight divisible, then we call $G$ a \textit{minimal nonperfectly weight divisible graph} ({\bf MNPWD} for short). In this paper, we study the structure of minimal nonperfectly weight divisible fork-free graphs, and prove the following theorem.
\begin{theorem}\label{miniclaw}
	Every MNPWD fork-free graph is claw-free.
\end{theorem}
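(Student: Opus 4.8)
The plan is to argue by contradiction. Suppose $G$ is a minimal nonperfectly divisible fork-free graph that contains an induced claw, with centre $v$ and leaves forming a triad $T=\{a,b,c\}\subseteq N(v)$. Note first that $G$ itself has no perfect division: being minimal nonperfectly divisible, every proper induced subgraph is perfectly divisible, so the offending subgraph must be $G$. I would then record two reductions. Since perfect divisibility is hereditary and (as I would prove or cite) is preserved both under clique cutsets and under substitution of a perfectly divisible graph into a vertex, a minimal counterexample can be assumed to have no clique cutset and no homogeneous set of size at least two; indeed, if $H$ were such a homogeneous set then $G[H]$ and the graph obtained by contracting $H$ to a single representative vertex are proper induced subgraphs, hence perfectly divisible, forcing $G$ to be perfectly divisible. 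Second, I would observe a clean necessary condition: for every vertex $u$, the pair $A=V(G)\setminus N(u)$, $B=N(u)$ partitions $V(G)$, and since every clique of $N(u)$ extends by $u$ we have $\omega(G[B])=\omega(G[N(u)])\le\omega(G)-1$; as $u$ is isolated in $G[A]$, this is an actual perfect division unless $G[V(G)\setminus N[u]]$ fails to be perfect. Because $G$ has no perfect division, the non-neighbourhood of every vertex therefore induces a non-perfect graph.

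Next I would exploit fork-freeness locally at $v$. If some $d\notin N[v]$ were adjacent to exactly one vertex of a triad $\{x,y,z\}\subseteq N(v)$ and to neither of the other two, then $\{v,x,y,z,d\}$ would induce a fork centred at $v$; therefore every non-neighbour of $v$ meets each triad of $N(v)$ in $0$ or at least $2$ vertices. I would use this attachment rule to control how the set $M=V(G)\setminus N[v]$ of non-neighbours hangs onto $N(v)$. By minimality $G[M]$ is perfectly divisible, so it admits a perfect division $(A_M,B_M)$ with $G[A_M]$ perfect and $\omega(G[B_M])\le\omega(G[M])-1\le\omega(G)-1$. The natural candidate division of $G$ is then $A=\{v\}\cup A_M$ and $B=N(v)\cup B_M$: the part $A$ is perfect because $v$ is isolated in $G[A]$ and $G[A_M]$ is perfect, while both $\omega(G[N(v)])$ and $\omega(G[B_M])$ are already strictly below $\omega(G)$.

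The crux, and the step I expect to be the main obstacle, is to show that $\omega(G[B])<\omega(G)$, that is, that no maximum clique of $G$ straddles $N(v)$ and the non-neighbours in $B_M$. A straddling clique $Q=Q_N\cup Q_B$ (with $Q_N\subseteq N(v)$, $Q_B\subseteq B_M$, and $Q_N$ complete to $Q_B$) is precisely the configuration on which the local fork constraints at $v$, the absence of a clique cutset, and the absence of a homogeneous set all bear: I would aim to show, via an extremal choice of the claw centre $v$ (and of the division of $M$), that such a maximum straddling clique must either complete a fork together with $v$ and the triad $T$, or expose a homogeneous set or a clique cutset, in each case contradicting one of the standing hypotheses. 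Controlling these straddling cliques — equivalently, pinning down the interaction between $N(v)$ and $M$ finely enough to bound $\omega(G[N(v)\cup B_M])$ — is the genuine difficulty; once it is settled, $(A,B)$ is a perfect division of $G$, contradicting that $G$ has none, and hence $G$ is claw-free.
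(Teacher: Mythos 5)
Your setup matches the paper's starting point: the minimality reductions you record (no homogeneous set, and the fact that $G[M(u)]$ is non-perfect for every vertex $u$, where $M(u)=V(G)\setminus N[u]$) are exactly the paper's Lemma~\ref{homoset} and the consequence of Lemma~\ref{Mv}. But your argument has a genuine gap at precisely the step you yourself flag as the crux: you never show that $\omega(G[N(v)\cup B_M])<\omega(G)$, i.e., that no maximum clique straddles $N(v)$ and $B_M$. The sketch you offer for this step would not work as described. The fork constraint at the claw centre $v$ only restricts how non-neighbours of $v$ attach to \emph{triads} (stable sets of size three) inside $N(v)$; a straddling clique $Q_N\cup Q_B$ with $Q_N\subseteq N(v)$ and $Q_B\subseteq B_M$ is a configuration built entirely out of cliques, so it need not interact with the triad $T$ at all, and nothing forces it to complete a fork with $v$ and $T$, nor to expose a homogeneous set or a clique cutset. (Your clique-cutset reduction is itself asserted without proof or reference, and the paper never needs it.) Since an arbitrary perfect division $(A_M,B_M)$ of $G[M(v)]$ gives no control over the edges between $B_M$ and $N(v)$, the straddling problem is not a technicality to be deferred --- it is the entire difficulty of the theorem, and your proposal leaves it unresolved.

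The paper circumvents this obstacle by proving something much stronger than perfect divisibility of $G[M(v)]$: for a claw centre $v$ in a minimal nonperfectly divisible fork-free graph, $G[M(v)]$ is itself \emph{perfect} (Lemma~\ref{4.222}). With that in hand one takes $A=\{v\}\cup M(v)$ and $B=N(v)$, and no straddling clique can exist because every clique of $N(v)$ extends by $v$, so $\omega(G[B])\le\omega(G)-1$ holds trivially; the contradiction with Lemma~\ref{Mv} is then immediate. The real work goes into proving perfection of $M(v)$: first, minimal counterexamples are shown to contain no odd parachute (Lemmas~\ref{4.3} and~\ref{minipara2}), which via Corollary~\ref{new2.5} forces $U'=\emptyset$ for any odd hole $C$ in $G[M(v)]$; then a layered induction (with $Y_0=U$ and $Y_i$ the successive neighbourhoods inside $M(C)$) shows that no vertex of $M(C)$ can be the centre of a claw --- contradicting the fact that $v\in M(C)$ is one. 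To rescue your approach you would need to replace the arbitrary division $(A_M,B_M)$ by structural information of this kind; as written, the proposal is a plan whose decisive step is missing.
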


Based on Theorem~\ref {miniclaw}, to confirm Conjecture~\ref {fork}, we may alternatively prove that claw-free graphs are perfectly weight divisible. We also prove that, for $F\in \{P_7, P_{6}\cup K_1\}$, (fork, $F$)-free graphs are perfectly weight divisible, and hence perfectly divisible.

\begin{theorem}\label{P_7}
	Every (fork, $P_7$)-free graph is perfectly weight divisible.
\end{theorem}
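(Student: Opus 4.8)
The plan is to prove the equivalent statement that every (fork, $P_7$)-free graph $G$ admits a \emph{perfect division}, i.e.\ a partition $V(G)=A\cup B$ with $G[A]$ perfect and $\omega(G[B])<\omega(G)$; since the class is hereditary, establishing this for every such $G$ yields perfect divisibility. Suppose the statement fails and let $G$ be a counterexample with $|V(G)|$ minimum, so that every proper induced subgraph of $G$ is perfectly divisible; that is, $G$ is a minimal nonperfectly divisible graph. By Theorem~\ref{miniclaw}, $G$ is claw-free, and hence $G$ is a (claw, $P_7$)-free graph. Thus it suffices to show that no minimal nonperfectly divisible (claw, $P_7$)-free graph exists, and the entire argument can be carried out inside the far more rigid class of claw-free graphs, where $\alpha(N(v))\le 2$ for every vertex $v$.

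First I would collect the standard decomposition reductions. A minimal nonperfectly divisible graph is connected, has no clique cutset, and contains no homogeneous set: in each case a perfect division of $G$ can be assembled from perfect divisions of the strictly smaller pieces (the blocks glued along a clique cutset, or $G[X]$ together with the graph obtained by contracting a homogeneous set $X$ to a single vertex), contradicting minimality. I would also record that $G$ is not perfect, since otherwise $B=\emptyset$ gives a perfect division; so by the Strong Perfect Graph Theorem $G$ contains an odd hole or an odd antihole. The crucial use of $P_7$-freeness enters here: any hole of length at least $8$ contains an induced $P_7$ on seven consecutive vertices, while $C_7$ itself is $P_7$-free, so every hole of $G$ has length in $\{4,5,6,7\}$ and every odd hole of $G$ is a $C_5$ or a $C_7$.

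The core of the proof is then to manufacture the partition $A\cup B$. The natural route is a neighbourhood/induction argument: choose a vertex $v$ (for instance on an odd hole, or in as few maximum cliques as possible), take a perfect division $(A',B')$ of the perfectly divisible graph $G-v$, and show that $v$ can be returned to $A'$ or to $B'$ without destroying perfection of the perfect part or the bound on the clique number of the other part. Here claw-freeness is exploited through $\alpha(N(v))\le 2$ (so $\overline{G[N(v)]}$ is triangle-free), and the bound on hole lengths sharply restricts the odd holes and odd antiholes that could pass through $v$. Odd antiholes are comparatively benign: deleting a single vertex of $\overline{C_k}$ leaves the perfect graph $\overline{P_{k-1}}$, so under the no-homogeneous-set and no-clique-cutset reductions they are readily absorbed; the real work is to control the odd holes.

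I expect the main obstacle to be the detailed structural analysis around a $C_7$. A $C_7$ contains an induced $P_6$, so it is excluded by $P_6$-freeness; consequently the $C_7$ is precisely the configuration that the known (fork, $P_6$)-free result of \cite{forkfreeconjecture} never has to confront, and handling it is the genuinely new step in passing from $P_6$ to $P_7$. Concretely, given a $C_7$ in the claw-free graph $G$ one must describe how each vertex of $G$ attaches to it, using that no vertex can be adjacent to three pairwise non-adjacent vertices of the hole (that would create a claw) and that overly long attachments create a forbidden $P_7$, and then argue that a bounded, clique-structured set $B$ can be removed so that $B$ meets every maximum clique while $G-B$ contains neither an odd hole nor an odd antihole. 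Assembling these local constraints into a single perfect division of $G$, and ruling out every way in which the construction could fail, is where the bulk of the case analysis lies and is what finally contradicts the choice of $G$.
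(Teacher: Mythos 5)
Your setup is fine as far as it goes: taking a minimum counterexample $G$, noting it is minimal nonperfectly divisible, and invoking Theorem~\ref{miniclaw} to get claw-freeness are all legitimate (the paper's proof of this theorem, incidentally, never needs claw-freeness). But after that point the proposal stops being a proof. Everything from ``the core of the proof is then to manufacture the partition'' onward is a description of work to be done, not an argument: you never say how the vertex $v$ is returned to $A'$ or to $B'$, never carry out the ``detailed structural analysis around a $C_7$,'' and never derive a contradiction. Worse, the route you sketch --- take a perfect division $(A',B')$ of $G-v$ and reinsert $v$ --- has no mechanism for success: adding $v$ to $A'$ can create an odd hole through $v$, adding $v$ to $B'$ can raise $\omega(G[B'\cup\{v\}])$ to $\omega(G)$, and nothing in claw-freeness or in the bound on hole lengths rules out both failures simultaneously; this vertex-by-vertex extension is exactly the naive approach that fails for perfect divisibility and is why the subject needs structural lemmas at all. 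The unproven ``standard reduction'' that a minimal nonperfectly divisible graph has no clique cutset is also asserted rather than shown (the paper uses only the homogeneous-set reduction, Lemma~\ref{homoset}), though that is minor by comparison.

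The real miss is that $P_7$-freeness enters the paper's proof in a completely different, and much more decisive, way than bounding hole lengths by $7$. The Section~\ref{s2} machinery, culminating in Lemma~\ref{hp}, shows that a minimal nonperfectly divisible fork-free graph must contain a vertex $v$, an odd hole $C$ in $G[M(v)]$, a component $B$ of $G[M(C)]$, and a balloon center $u\in U$ with $N_C(u)=\{v_i,v_{i+1}\}$ that is \emph{mixed} on $V(B)$. A mixed pair --- adjacent $a,a'\in V(B)$ with $u\sim a$ and $u\not\sim a'$ --- immediately yields the induced seven-vertex path $a'auv_{i+1}v_{i+2}v_{i+3}v_{i+4}$, since $a$ and $a'$ are anticomplete to $V(C)$ and $u$ has no neighbors on $C$ other than $v_i,v_{i+1}$; this contradicts $P_7$-freeness in two lines. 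So the role of $P_7$ is to limit how far an attachment can reach \emph{away} from the hole into $M(C)$, not how long a hole can be. Your observation that every odd hole is a $C_5$ or a $C_7$ is true but is not what closes the argument, and your proposal contains no substitute for Lemma~\ref{hp}. As it stands, the proposal is a plan whose central step is missing.
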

\begin{theorem}\label{P6cupK1}
	Every (fork, $P_{6}\cup K_1$)-free graph is perfectly weight divisible.
\end{theorem}

By a simple induction on $\omega(G)$ we have that every perfectly divisible graph $G$ satisfies $\chi(G)\leq \binom{\omega(G)+1}{2}$\cite{H2018}. Combining Theorems \ref{P_7} and \ref{P6cupK1}, we can immediately obtain the following corollary:

\begin{corollary}
	For $F\in \{P_7, P_6\cup K_1\}$, each (fork, $F$)-free graph $G$ satisfies $\chi(G)\leq \binom{\omega(G)+1}{2}$.
\end{corollary}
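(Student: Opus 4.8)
The plan is to derive the corollary directly from Theorems \ref{P_7} and \ref{hammer} together with the chromatic bound for perfectly divisible graphs recorded above. Fix $F\in\{P_7,P_6\cup K_1\}$ and let $G$ be a (fork, $F$)-free graph. If $F=P_7$ then Theorem \ref{P_7} applies, and if $F=P_6\cup K_1$ then Theorem \ref{hammer} applies; in either case $G$ is perfectly divisible. It therefore remains only to invoke the implication that perfect divisibility forces $\chi(G)\le\binom{\omega(G)+1}{2}$.

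For completeness I would spell out that implication by induction on $\omega(G)$. When $\omega(G)=1$ the graph $G$ is edgeless, so $\chi(G)=1=\binom{2}{2}$. For the inductive step, use a perfect division $V(G)=A\cup B$ with $G[A]$ perfect and $\omega(G[B])<\omega(G)$. Perfection of $G[A]$ gives $\chi(G[A])=\omega(G[A])\le\omega(G)$. Since perfect divisibility is hereditary (its definition quantifies over all induced subgraphs), the induced subgraph $G[B]$ is again perfectly divisible with $\omega(G[B])\le\omega(G)-1$, so the induction hypothesis yields $\chi(G[B])\le\binom{\omega(G)}{2}$. Colouring $A$ and $B$ with disjoint palettes and applying Pascal's rule,
\[
\chi(G)\le\chi(G[A])+\chi(G[B])\le\omega(G)+\binom{\omega(G)}{2}=\binom{\omega(G)+1}{2}.
\]

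The only point needing care is the heredity observation used to apply the induction to $G[B]$, but this is immediate from the definition of perfect divisibility. There is no genuine obstacle here: all the difficulty sits inside Theorems \ref{P_7} and \ref{hammer}, whose proofs establish that the two graph classes are perfectly divisible; once those are in hand the corollary is a one-line consequence of the stated chromatic bound from \cite{Hoang}.
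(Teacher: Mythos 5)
Your proof is correct and takes essentially the same route as the paper: the paper also obtains the corollary by combining Theorems~\ref{P_7} and \ref{hammer} with the fact, cited from \cite{Hoang} as a simple induction on $\omega(G)$, that every perfectly divisible graph $G$ satisfies $\chi(G)\leq \binom{\omega(G)+1}{2}$. Your spelled-out induction (coloring the perfect part with at most $\omega(G)$ colors, the remainder by the inductive hypothesis on the hereditary class, using disjoint palettes and Pascal's rule) is a sound rendering of that cited argument.
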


This paper is organized in the following way. In Section \ref{sec2}, we first present some general properties on minimal nonperfectly weight divisible fork-free graphs. Then, we prove Theorem~\ref{miniclaw} in Section \ref{sec3}, and prove Theorems~\ref{P_7} and \ref{P6cupK1} in Section \ref{sec4}.

\section{Minimal nonperfectly weight divisible fork-free graphs}\label{sec2}

In this section, we study the structure of minimal nonperfectly weight divisible fork-free graphs. The following five lemmas are used often in the sequel.
\begin{lemma}\label{homoset}{\em\cite{CS2019,H2018}}
	No MNPWD graph may have homogeneous sets.
\end{lemma}

\begin{lemma}\label{Mv}{\em\cite{KKS2022}}
Let ${\cal C}$ be a hereditary class of graphs. Suppose that every graph $H \in {\cal C}$ has a vertex $v$ such that $H[M_H(v)]$ is perfect.
Then every graph in ${\cal C}$ is perfectly weight divisible.
\end{lemma}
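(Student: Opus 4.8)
The plan is to deduce perfect divisibility of every $G\in\mathcal{C}$ by producing, for each of its induced subgraphs, a single explicit perfect division built from the hypothesised vertex $v$. Here I read $M_H(v)$ as the set consisting of $v$ together with all of its non-neighbours in $H$, so that the complement $V(H)\setminus M_H(v)$ is exactly the open neighbourhood $N_H(v)$. With this reading the whole statement reduces to one structural observation about neighbourhoods, and the role of the hypothesis is simply to guarantee that the other side of the partition is perfect.

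First I would fix $G\in\mathcal{C}$ and an arbitrary induced subgraph $H$ of $G$; by the definition of perfect divisibility it suffices to give a perfect division of each such $H$. Because $\mathcal{C}$ is hereditary, $H$ again lies in $\mathcal{C}$, so the hypothesis applies to $H$ itself and furnishes a vertex $v\in V(H)$ for which $H[M_H(v)]$ is perfect. I would then take $A=M_H(v)$ and $B=V(H)\setminus M_H(v)=N_H(v)$: the part $A$ is perfect by hypothesis, and the only remaining task is the clique-number inequality $\omega(H[B])<\omega(H)$.

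The key step --- and essentially the only content of the argument --- is that the open neighbourhood of a vertex cannot contain a maximum clique. Indeed, if $K$ is a clique contained in $N_H(v)$, then $v$ is complete to $K$ and $v\notin K$, so $K\cup\{v\}$ is a strictly larger clique of $H$; hence every clique of $H[B]$ has size at most $\omega(H)-1$, giving $\omega(H[B])\le\omega(H)-1<\omega(H)$. Thus $(A,B)$ is a perfect division of $H$, and as $H$ ranged over all induced subgraphs of $G$ this shows that $G$ is perfectly divisible, which is what we wanted.

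I do not expect a genuine obstacle here, since the lemma is a bookkeeping device rather than a deep statement. The one point that must be handled with care is that $M_H(v)$ has to be computed inside the induced subgraph $H$ and not inside the ambient graph $G$ --- it is precisely the heredity of $\mathcal{C}$ that legitimises applying the hypothesis to $H$ --- and, correspondingly, that all neighbourhoods and clique numbers in the displayed inequality are taken in $H$. All the real difficulty in any later use of the lemma is thereby deferred to the separate problem of exhibiting, in each graph of $\mathcal{C}$, a vertex whose non-neighbourhood induces a perfect graph.
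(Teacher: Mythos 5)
Your proof is correct. The paper does not actually prove this lemma---it is imported by citation from \cite{forkfreeconjecture}---but your argument is precisely the standard proof given there: heredity lets the hypothesis be applied to an arbitrary induced subgraph $H$, and the partition $A=\{v\}\cup(V(H)\setminus N_H[v])$, $B=N_H(v)$ is a perfect division because any clique of $H[B]$ extends by $v$ to a strictly larger clique of $H$. The only point to flag is your convention that $v\in M_H(v)$: under the usual convention, where $M_H(v)$ denotes the non-neighbours of $v$ excluding $v$ itself, your set $A$ equals $M_H(v)\cup\{v\}$, and one needs the (one-line) observation that adjoining an isolated vertex to the perfect graph $H[M_H(v)]$ preserves perfection; you do correctly place $v$ on the perfect side, which is essential, since a maximum clique of $H$ through $v$ may lie entirely inside $N_H[v]$ and the clique-extension step would fail if $v$ were put into $B$.
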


\begin{lemma}\label{2.3}{\em\cite{Wu2024}}
	Let $G$ be a connected fork-free graph. If $G$ is MNPWD, then for each vertex $v\in V(G)$, $M(v)$ contains no odd antihole except $C_5$.
\end{lemma}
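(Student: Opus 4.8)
The plan is to argue by contradiction and manufacture an induced fork. Recall that $M(v)=V(G)\setminus N[v]$ is the non-neighbourhood of $v$, so the hypothesis to be refuted is that some $M(v)$ contains an induced antihole $Q=\overline{C_{2k+1}}$ with $2k+1\ge 7$; note that then $v\notin Q$ and $v$ is anticomplete to $Q$. The crucial structural observation I would record first is that $\alpha(\overline{C_{2k+1}})=2$, so $Q$ contains no stable set of size three and consequently $K_1+\overline{C_{2k+1}}$ is fork-free: any induced fork needs three pairwise non-adjacent vertices among its ends. Hence a vertex complete to $Q$ can never by itself complete a fork inside $\{w\}\cup Q$, and the whole difficulty is to supply the missing third stable end from outside $Q$. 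This is exactly where the anticomplete vertex $v$ (and, more generally, vertices reached along a path from $v$) will be used.

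Next I would locate a vertex attached to $Q$ in a controlled way. Since $1<|Q|<|V(G)|$, Lemma~\ref{homoset} shows that $Q$ is not a homogeneous set, so some vertex is mixed on $Q$. More usefully, using connectivity I would take a shortest path from $v$ to $Q$; as $v$ is anticomplete to $Q$ this path has length at least two, and it produces a vertex $w$ with a neighbour in $Q$ together with a neighbour $u$ of $w$ that is itself anticomplete to $Q$ (and, if the path is long enough, a further vertex anticomplete to $Q\cup\{w\}$). The vertex $u$ is the workhorse: it is adjacent to $w$ and stable to all of $Q$, so it can play the role of an extra end or of the subdivision vertex of a fork.

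The heart of the argument is an attachment analysis for $w$ on $Q$ together with the corresponding fork constructions, and I expect this to be the main obstacle. First I would show that fork-freeness severely restricts $N(w)\cap Q$: if $w$ has a single neighbour in $Q$, or a neighbour and a non-neighbour at the ``wrong'' cyclic distance, then the rich adjacency of the antihole already yields an induced fork inside $Q\cup\{w\}$ (for instance, from a neighbour $q_i$, a non-neighbour $q_{i+1}$ and a vertex $q_{i+2}$ with $q_i\sim q_{i+2}$ and $q_{i+1}\not\sim q_{i+2}$). The analysis should reduce to a short list, the non-trivial survivors being, up to complementation, $N(w)\cap Q$ equal to a single non-edge $\{q_i,q_{i+1}\}$ of $Q$. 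For that mixed survivor I would build the fork with centre $w$, ends $u$ and $q_{i+1}$, subdivision vertex $q_i$ and pendant $q_{i+2}$, using $u\sim w$, $u$ anticomplete to $Q$, $q_i\sim q_{i+2}$, $q_{i+1}\not\sim q_{i+2}$ and $w\not\sim q_{i+2}$. For the case $w$ complete to $Q$ I would instead take a non-edge $\{a,b\}$ of $Q$ as the two stable ends, $u$ as subdivision vertex, and a vertex anticomplete to $Q\cup\{w\}$ further along the path as the pendant.

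The delicate point, which I expect to require the most care, is the boundary case of the complete attachment when $v$ itself lies at distance exactly two from $Q$, so that no vertex anticomplete to $Q\cup\{w\}$ is immediately available to serve as the pendant; here one must argue separately, for instance by showing that otherwise $N(v)$ is forced into a configuration that either produces a different fork or a homogeneous set contradicting Lemma~\ref{homoset}. Finally, I would note that the whole scheme needs $2k+1\ge 7$: for the five-vertex antihole $\overline{C_5}\cong C_5$ the attachment classification degenerates (and $C_5$ is simultaneously an odd hole), so this case is genuinely excluded from the statement and is disposed of by the separate odd-hole analysis. Assembling the cases, each possibility yields an induced fork, contradicting that $G$ is fork-free, and therefore $M(v)$ contains no odd antihole other than $C_5$.
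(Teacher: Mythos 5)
This lemma is not proved in the paper at all: it is imported verbatim from \cite{Wu2024}, so your proposal has to stand entirely on its own, and as written it does not. The overall skeleton (shortest path from $v$ to the antihole $Q$, an attachment analysis for the vertex $w$ that meets $Q$, a helper neighbour $u$ anticomplete to $Q$, explicit fork constructions) is a sensible one, and the individual forks you exhibit are correct; the problem is that the pivotal attachment classification is only asserted, and the assertion is wrong in both directions. First, $N(w)\cap Q=\{q_i,q_{i+1}\}$ is \emph{not} a survivor of the analysis inside $Q\cup\{w\}$: writing the non-edges of $Q$ as the consecutive pairs, the set $\{w,q_{i-2},q_i,q_{i-3},q_{i-1}\}$ already induces a fork (centre $q_i$, leaves $w$ and $q_{i-2}$, subdivision vertex $q_{i-3}$, pendant $q_{i-1}$), with no need of $u$. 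Second, and more seriously, many attachments missing from your ``short list'' do survive every fork inside $Q\cup\{w\}$: for $Q=\overline{C_7}$ the attachments $\{q_1,q_3,q_5\}$, $Q\setminus\{q_1\}$ and $Q\setminus\{q_1,q_2\}$ all yield fork-free graphs on $Q\cup\{w\}$. So the case analysis you describe is not exhaustive, and the cases it misses are precisely the ones that must be killed by forks through $u$; that exhaustive argument (whose correct conclusion is that a vertex with a neighbour in $Q$ \emph{and} a neighbour anticomplete to $Q$ must be complete to $Q$) is the actual first half of the lemma and is nowhere carried out.

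The second gap is the one you flag yourself, and it is not a boundary nuisance but the heart of the matter. Once $w$ is complete to $Q$, your final fork needs a pendant vertex anticomplete to $Q\cup\{w\}$ adjacent to $u$, and when $\mathrm{dist}(v,Q)=2$ no such vertex need exist. At that point fork-freeness alone is provably insufficient: vertices mixed on $Q$ genuinely occur in fork-free graphs (again, $\overline{C_7}$ plus a vertex adjacent to exactly $q_1,q_3,q_5$ is fork-free), so one cannot conclude by producing yet another fork; the minimality hypothesis must enter through concrete claims, for instance that for every component $R$ of the set of vertices anticomplete to $Q$, $N(R)$ is complete to $R\cup Q$ (whence $R$ is a homogeneous set contradicting Lemma~\ref{homoset} when $|R|\ge 2$), followed by a separate argument when all such components are singletons, showing that the mixed vertices and the vertices complete to $Q$ force either a further fork or a homogeneous set containing $Q$ (e.g.\ $Q$ itself is homogeneous when no mixed vertices exist). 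Your sketch replaces all of this with ``argue separately, for instance by showing that otherwise $N(v)$ is forced into a configuration\dots'', which is a placeholder rather than an argument. Until both the exhaustive attachment classification and this endgame are written out, the lemma is not proved.
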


\begin{lemma}\label{2.4}{\em\cite{Wu2024}}
	Let $G$ be a fork-free graph, and $C = v_1v_2 \ldots v_n v_1$ an odd hole of $G$. If there exist two adjacent vertices $u$ and
	$v$ in $V(G)\setminus V(C)$ such that $u$ is not anticomplete to $V(C)$ but $v$ is, then $N(u)\cap V(C) = \{v_j, v_{j+1}\}$, for some $j\in \{1, 2,\ldots,n\}$, or
	$N(u)\cap V(C) = V(C)$.
\end{lemma}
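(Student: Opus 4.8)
The plan is to set $S=N(u)\cap V(C)$ and to show that, unless $S=V(C)$, the set $S$ consists of exactly two consecutive vertices of $C$. Throughout, the strategy is to assume a forbidden configuration of neighbours of $u$ on $C$ and then exhibit an induced fork, using $v$ (adjacent to $u$ but anticomplete to $V(C)$) either as the pendant end of the subdivided branch or as one of the two free leaves. Since $C$ is a hole we have $n\ge 5$, so two vertices of $C$ are adjacent exactly when they are consecutive; I will invoke this repeatedly to verify the non-edges a fork requires.

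First I would show that $S$ has no isolated vertex. If $v_i\in S$ but $v_{i-1},v_{i+1}\notin S$, then $\{v_i,v_{i-1},v_{i+1},u,v\}$ induces a fork with centre $v_i$ (adjacent to $v_{i-1},v_{i+1},u$) and subdivided branch the path $v_i,u,v$: indeed $v_{i-1}\not\sim v_{i+1}$ as $n\ge 5$, $u\not\sim v_{i-1},v_{i+1}$ by hypothesis, and $v$ is anticomplete to $V(C)$. Hence every vertex of $S$ has a neighbour in $S$ along $C$, so $S$ is a disjoint union of arcs (maximal runs of consecutive vertices of $C$), each of length at least two. If $S=V(C)$ we are already in the second alternative, so assume henceforth that $T:=V(C)\setminus S\neq\emptyset$.

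Next I would prove that each maximal arc has length exactly two. Suppose some arc contains three consecutive vertices $v_a,v_{a+1},v_{a+2}\in S$; by maximality its left neighbour satisfies $v_{a-1}\in T$, i.e. $u\not\sim v_{a-1}$. Then $\{u,v_{a+2},v,v_a,v_{a-1}\}$ induces a fork with centre $u$, leaves $v_{a+2}$ and $v$, and subdivided branch the path $u,v_a,v_{a-1}$; the needed non-edges $v_{a+2}\not\sim v_a$ (distance $2$), $v_{a+2}\not\sim v_{a-1}$ (distance $3$, non-adjacent as $n\ge 5$), $u\not\sim v_{a-1}$, and those involving $v$ all hold. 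So every arc of $S$ has length exactly two.

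Finally I would rule out two or more arcs, and this is the step I expect to be the main obstacle. The naive attempt, using a within-arc edge as the subdivided branch, fails because $u$ is complete to each length-two arc, whereas the centre of a fork must miss the end of its branch. The fix is to route the branch through an arc endpoint into the adjacent $T$-vertex: given two distinct arcs, let $A_1=\{v_a,v_{a+1}\}$ with $v_{a-1}\in T$ (left maximality) and let $v_c$ be a vertex of the next arc, so that $3\le c-a\le n-3$. Then $\{u,v,v_c,v_a,v_{a-1}\}$ induces a fork with centre $u$, leaves $v$ and $v_c$, and subdivided branch the path $u,v_a,v_{a-1}$; the only nontrivial checks are $v_c\not\sim v_a$ and $v_c\not\sim v_{a-1}$, both forced by $3\le c-a\le n-3$, together with $u\not\sim v_{a-1}$. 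The delicate point is that this construction works even when the gap between the two arcs is a single vertex, which is exactly the configuration the obvious forks miss. Consequently $S$ is a single arc of length two, i.e. $S=\{v_j,v_{j+1}\}$ for some $j$, as required. I note in passing that oddness of $C$ is never used beyond $n\ge 5$; the whole argument is local fork-detection near a hole.
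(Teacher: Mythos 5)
Your proof is correct. All three fork configurations are genuinely induced forks: in each case the required non-edges follow from $n\ge 5$ (so vertices of $C$ at circular distance $2$ or $3$ are non-adjacent), from $v$ being anticomplete to $V(C)$, and from the maximality of the arcs; and the separation bound you need in the last step does hold, since two distinct maximal length-two arcs $\{v_a,v_{a+1}\}$ and $\{v_b,v_{b+1}\}$ must satisfy $3\le b-a\le n-3 \pmod n$, so the choice of $v_c$ with $3\le c-a\le n-3$ always exists (including in the single-vertex-gap case you flag). Note that this paper contains no proof of Lemma~\ref{2.4} to compare against --- it is imported from \cite{Wu2024} --- but your three-stage local fork-detection (no isolated neighbour on $C$, no arc of length three, no two arcs, always using $v$ as the pendant vertex) is the natural argument and supplies exactly what the citation omits; your closing remark that only $n\ge 5$, not oddness, is used is also accurate.
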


\begin{lemma}\label{2.5}{\em\cite{Wu2024}}
	Let $G$ be an MNPWD fork-free graph, $v\in V(G)$, and $C$ an odd hole contained in $G[M(v)]$.
	Then $N(M(C))$ is not complete to $V(C)$.
\end{lemma}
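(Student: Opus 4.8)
The plan is to argue by contradiction, so suppose that $N(M(C))$ is complete to $V(C)$. Recall that $M(C)$ denotes the set of vertices anticomplete to $V(C)$; since $v$ is isolated in $G[M(v)]$ and $V(C)\subseteq M(v)$, the vertex $v$ is anticomplete to $V(C)$, so $v\in M(C)$ and $M(C)\ne\emptyset$. A minimal nonperfectly divisible graph may be assumed connected, and $V(C)\not\subseteq M(C)$, so $M(C)$ is a proper nonempty subset of $V(G)$; hence $N(M(C))\ne\emptyset$ and the statement is not vacuous. First I would record the shape of a typical $w\in N(M(C))$: by definition $w\notin M(C)$ and $w$ has a neighbor $z\in M(C)$; since $w\notin M(C)$, $w$ has a neighbor on $C$, and since $M(C)$ is anticomplete to $V(C)$ no vertex of $V(C)$ lies in $N(M(C))$. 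Thus every $w\in N(M(C))$ has both a neighbor on $C$ and a neighbor $z\in M(C)$ anticomplete to $V(C)$ (consistent with the dichotomy of Lemma~\ref{2.4}), and the contradiction hypothesis says each such $w$ is complete to $V(C)$.

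The heart of the argument is a fork-construction. Fix $w\in N(M(C))$, so $w$ is complete to $V(C)$, and suppose $G[M(C)]$ contains an edge $zz''$ with $z\in N(w)$ but $z''\notin N(w)$. Choose two nonadjacent vertices $v_1,v_3$ of $C$, which is possible since $|V(C)|\ge 5$. Then $w$ is adjacent to $v_1,v_3,z$ and $z$ is adjacent to $z''$, while the nonedges $v_1v_3$ and $wz''$, together with the fact that $z$ and $z''$ are anticomplete to $V(C)$, show that $\{w,v_1,v_3,z,z''\}$ induces a fork with center $w$, leaves $v_1,v_3,z''$, and subdivision vertex $z$. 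This contradicts fork-freeness. Consequently, for every $w\in N(M(C))$ the set $N(w)\cap M(C)$ is a union of connected components of $G[M(C)]$; in particular, for each component $D$ of $G[M(C)]$ the vertex $w$ is either complete or anticomplete to $D$.

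I would then turn this into a homogeneous set. Let $D$ be any component of $G[M(C)]$, and write $N(C)=N(M(C))\cup R$, where $R$ consists of the vertices meeting $V(C)$ but anticomplete to $M(C)$. Every vertex outside $D$ lies in $V(C)$, in $M(C)\setminus D$, or in $N(C)$. Vertices of $V(C)$ and of $R$ are anticomplete to $M(C)$ and hence to $D$; vertices of $M(C)\setminus D$ are anticomplete to $D$ because $D$ is a component; and each vertex of $N(M(C))$ is complete or anticomplete to $D$ by the previous step. Hence every vertex outside $D$ is complete or anticomplete to $D$, so $D$ is a homogeneous set whenever $|D|\ge 2$ (and $|D|<|V(G)|$ since $V(C)$ lies outside), contradicting Lemma~\ref{homoset}.

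The remaining, and main, obstacle is the case in which every component of $G[M(C)]$ is a single vertex, that is, $M(C)$ is a stable set; here the fork-construction is vacuous and a single vertex is not a homogeneous set. In this case each $z\in M(C)$ has all of its neighbors inside $N(M(C))$, hence complete to $V(C)$, and the configuration is genuinely realizable in fork-free graphs (for example an odd hole together with one vertex complete to it carrying a pendant), so the contradiction cannot be purely local and must use minimality beyond the absence of homogeneous sets. I expect this to be the delicate part: using that every proper induced subgraph of $G$ is perfectly divisible, I would construct an explicit perfect division of $G$ itself, placing $V(C)$ together with a suitable portion of the $C$-complete vertices $N(M(C))$ on the perfect side so that the clique number strictly drops on the other side, thereby contradicting the assumption that $G$ is not perfectly divisible. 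Lemma~\ref{2.3}, which forbids odd antiholes other than $C_5$ inside $M(v)$, would be invoked to control any further imperfection met while checking that this side is perfect.
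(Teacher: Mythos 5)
First, a point of comparison: the paper itself does not prove this lemma at all --- it is quoted from \cite{Wu2024} --- so there is no in-paper proof to measure your attempt against; it has to stand on its own. The part you do carry out is correct: the fork on $\{w,v_1,v_3,z,z''\}$ (claw with center $w$ and leaves $v_1,v_3,z$, pendant $z''$ hanging from $z$) is a valid induced fork, so each $w\in N(M(C))$ is complete or anticomplete to every component of $G[M(C)]$ --- this is exactly the argument behind the paper's Lemma~\ref{3.2} --- and then any component of order at least $2$ would be a homogeneous set, contradicting Lemma~\ref{homoset}. So you correctly reduce the lemma to the case that $M(C)$ is a stable set.

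The genuine gap is that this remaining case is not a corner case but the entire content of the lemma, and your final paragraph is a plan rather than a proof. When $M(C)$ is stable, the hypothesis says $G$ contains an odd parachute (take $u'\in N(M(C))$, which is complete to $V(C)$, together with a neighbor in $M(C)$), and as you yourself note this configuration is fork-free, so no local fork or homogeneous-set argument around $C$ can finish; one must exploit minimality globally. Moreover, the one concrete idea you offer --- ``placing $V(C)$ together with a suitable portion of the $C$-complete vertices on the perfect side'' --- cannot work as stated: any set $A\supseteq V(C)$ induces an odd hole, so $G[A]$ is never perfect. What is actually required is substantial: further homogeneous-set reductions (e.g., producing non-edges between $Z'$ and $N(M(C))$, since otherwise sets like $V(C)\cup Z\cup Z'$ are homogeneous), additional fork constructions exploiting those non-edges, and ultimately an explicit perfect division in which the perfect side avoids $C$ entirely, of the shape the paper uses in Lemma~\ref{2.20} (perfect side $U_1\cup M(C)\cup M(U_1\cup M(C))$, clique-reduced side $N(U_1\cup M(C))$). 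For scale: even with Lemma~\ref{2.5} already in hand, the closely analogous task of excluding odd parachutes occupies all of Section~\ref{s3} of the paper (Lemmas~\ref{4.3} and \ref{minipara2}). None of that machinery, nor any substitute for it, appears in your proposal, so the proof is incomplete precisely where the lemma's difficulty lies.
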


Let $G$ be an MNPWD fork-free graph, and $v\in V(G)$. By Lemmas~\ref{Mv} and \ref{2.3}, $G[M(v)]$ is not
perfect and must contain an odd hole. Let $C=v_1v_2...v_nv_1$ be an odd hole contained in $G[M(v)]$, where $n\geq 5$ and $n$ is odd.  
In this section, all subscripts are taken modulo $n$. Let   
\begin{eqnarray*}
	U_i(C)&=&\{u\in N(M(C))~|~N_{C}(u)=\{v_i,v_{i+1}\}\}~\mbox{for}~1\leq i\leq n,\\	 
    U'(C)&=&\{u'\in N(M(C))~|~N_{C}(u')=V(C)\},\\
	Z(C)&=&\{z\in N(C)\setminus N(M(C))|z~\mbox{is not complete to}~V(C)\}, and  \\
	Z'(C)&=&\{z\in N(C)\setminus N(M(C))|z~\mbox{is complete to}~V(C)\}
    \end{eqnarray*}
Let $U(C)=\bigcup_{i=1}^n U_i(C)$, $Y(C)=N_{M(C)}(U(C))$, and let $Y'(C)$ be the union of the vertex sets of all components of $G[M(C)]$ that contain some vertices of $Y(C)$. By Lemma \ref{2.4}, we can partition $N(M(C))$ into $U(C)$ and $U'(C)$, and $U(C)\neq \emptyset$ by Lemma \ref{2.5}. Note that $N(C)=Z'(C)\cup Z(C)\cup U(C)\cup U'(C)$.

If $U_i$ has two nonadjacent vertices $u$ and $u'$, then $\{u, u', v_i, v_{i-1}, v_{i-2}\}$ induces a fork. Therefore,
\begin{equation}\label{eqa-Ui-clique}
\mbox{$U_i(C)$ is a clique, and $U_i(C)\cap U_j(C)= \emptyset$ for any $i,j\in \{1,2,\ldots,n\}$ and $i\neq j$.}
\end{equation}

\begin{figure}[htbp]
	\begin{center}
		\includegraphics[width=5cm]{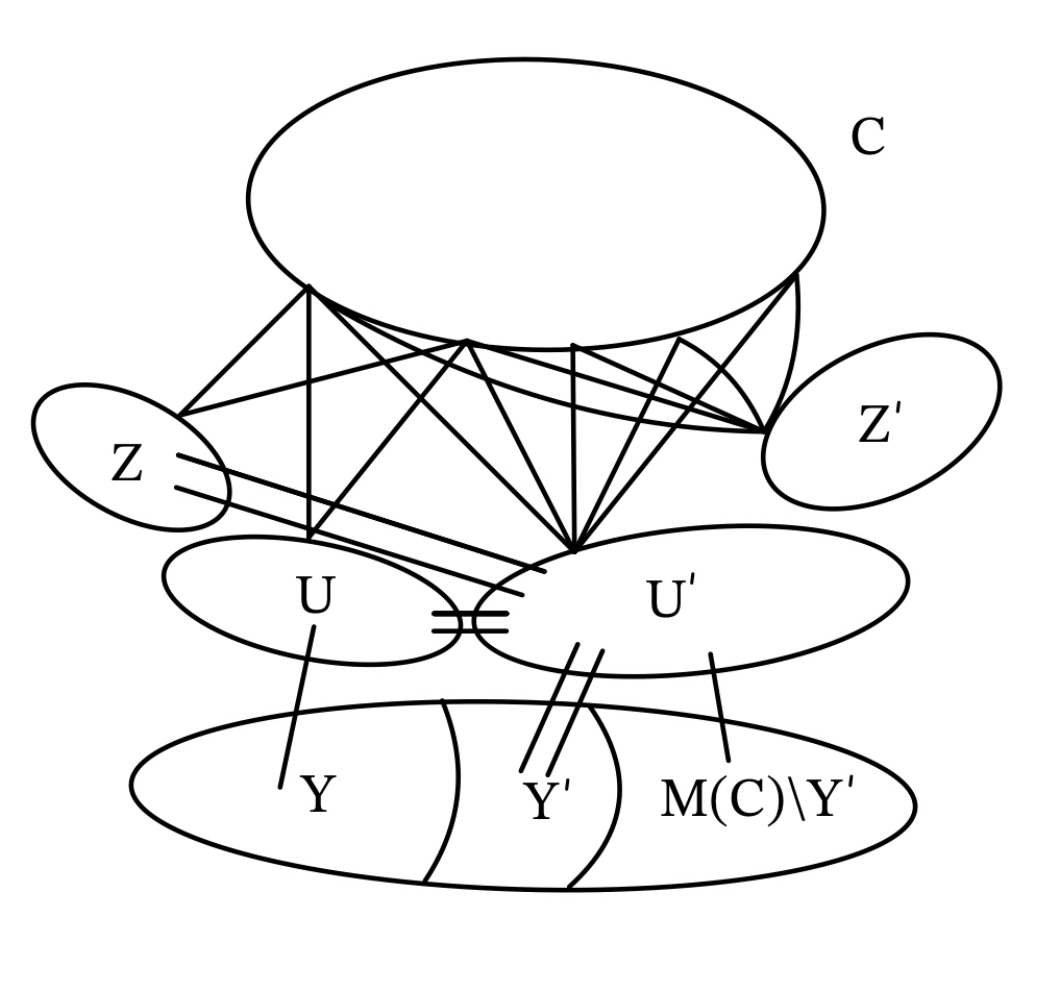}
	\end{center}
	\vskip -25pt
	\caption{A simple illustration of the sets $V(C), U, U', Z, Z'$ and $M(C)$ etc.}
	\label{fig-2}
\end{figure}

Throughout the remainder of this section, suppose that $G$ is a minimal nonperfectly weight divisible fork-free graph, $v\in V(G)$, and $C=v_1v_2\ldots v_nv_1$ is an odd hole contained in $G[M(v)]$. When there is no ambiguity, we abbreviate $U(C),U'(C),Y(C),Y'(C),Z(C),Z'(C),U_i(C)$ as  $U,U',Y,Y',Z,Z',U_i$ respectively, where $i\in\{1,2,\ldots,n\}$.  See Figure~\ref{fig-2}.

The following lemmas describe the sets $V(C),U,U',Z,Z'$, and $M(C)$ and the relations among them.

\begin{lemma}\label{lem-NMC(u)-clique}
	 $N_{M(C)}(u)$ is a clique for any $u\in U$.
\end{lemma}
\begin{proof}
	Suppose to its contrary, let $u\in U$ such that $N_{M(C)}(u)$ has two nonadjacent vertices $a_1$ and $a_2$. Without loss of generality, suppose that $N_C(u)=\{v_1,v_2\}$. Then \{$v_n,v_1,u,a_1,a_2$\} induces a fork.
\end{proof}
\begin{lemma}\label{lem-U'-MC}
For every $u'\in U'$ and every component $B$ of $G[M(C)]$, $u'$ is either complete or anticomplete to $V(B)$.
\end{lemma}
\begin{proof}
	 Let $u'\in U'$ and $a'\in N_{M(C)}(u')$. For any $a''\in N_{M(C)}(a')$, we have that $a''\sim u'$ to forbid \{$v_{n-1},v_1,u',a',a''$\} to be a fork.
\end{proof}
\begin{lemma}\label{lem-MC-Y'}
	$M(C)\setminus Y'$ is a stable set, and $N(x)\subseteq U'$ for any $x\in M(C)\setminus Y'$.
\end{lemma}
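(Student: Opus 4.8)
The plan is to treat $M(C)\setminus Y'$ one component of $G[M(C)]$ at a time and to convert the conclusion into an application of Lemma~\ref{homoset}. Since $Y'$ collects exactly the components of $G[M(C)]$ that meet $Y=N_{M(C)}(U)$, the set $M(C)\setminus Y'$ is the union of those components $B$ of $G[M(C)]$ with $V(B)\cap Y=\emptyset$, i.e.\ the components that are anticomplete to $U$. I would fix such a component $B$ and first determine which vertices outside $B$ can have a neighbour in $V(B)$. As $V(G)=V(C)\cup U\cup U'\cup Z\cup Z'\cup M(C)$, I check the pieces in turn: $V(B)$ is anticomplete to $V(C)$ because $B\subseteq M(C)$; to $M(C)\setminus V(B)$ because $B$ is a whole component of $G[M(C)]$; to $Z\cup Z'$ because $Z\cup Z'=N(C)\setminus N(M(C))$ has no neighbour in $M(C)$; and to $U$ by the choice $V(B)\cap Y=\emptyset$. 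Recalling the already established partition $N(M(C))=U\cup U'$ (this is where Lemma~\ref{2.4} enters), the only vertices left that may see $V(B)$ lie in $U'$, so $N(V(B))\subseteq U'$.

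Next I would promote this to the statement that $V(B)$ is a homogeneous set of $G$. By the previous paragraph every vertex of $V(C)\cup U\cup Z\cup Z'$ and every vertex of $M(C)\setminus V(B)$ is anticomplete to $V(B)$; the only vertices that could be partially joined to $V(B)$ are those of $U'$, and Lemma~\ref{3.2} says each $u'\in U'$ is complete or anticomplete to the component $B$. Hence every vertex of $V(G)\setminus V(B)$ is complete or anticomplete to $V(B)$. Since $V(C)$ is disjoint from $V(B)$ we have $|V(B)|<|V(G)|$, so if we had $|V(B)|>1$ then $V(B)$ would be a homogeneous set, contradicting Lemma~\ref{homoset}. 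Therefore each such component consists of a single vertex $x$, for which $N(x)=N(V(B))\subseteq U'$.

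Running this over all components of $G[M(C)]$ that avoid $Y$ finishes both assertions at once: $M(C)\setminus Y'$ is a union of singleton components of $G[M(C)]$, and since distinct components are anticomplete in $G$ this union is a stable set, while each of its vertices $x$ satisfies $N(x)\subseteq U'$. The main obstacle is the bookkeeping in the first paragraph rather than any hard combinatorics: one must make sure that the six sets $V(C),U,U',Z,Z',M(C)$ genuinely exhaust $V(G)$ and that $N(M(C))=U\cup U'$ holds with no escaping neighbour, since this is precisely what forbids a neighbour of $B$ outside $U'$ and what silently disposes of the vertex $v$ (which itself sits in $M(C)$). Once the neighbourhood of $B$ is correctly localised in $U'$, the rest is a direct appeal to Lemmas~\ref{3.2} and~\ref{homoset}, the only delicate point being the benign degenerate case $|V(B)|=1$, where $V(B)$ is not a homogeneous set and no contradiction is intended.
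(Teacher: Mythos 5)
Your proposal is correct and follows essentially the same route as the paper: take a component $B$ of $G[M(C)]$ avoiding $Y$ (equivalently, of $G[M(C)\setminus Y']$), observe that $N(V(B))\subseteq U'$, invoke Lemma~\ref{3.2} to see every vertex of $U'$ is complete or anticomplete to $V(B)$, and rule out $|V(B)|\ge 2$ via Lemma~\ref{homoset}. The only difference is that you spell out the bookkeeping ($V(G)=V(C)\cup U\cup U'\cup Z\cup Z'\cup M(C)$ and $N(M(C))=U\cup U'$) that the paper compresses into ``by the definition of $Y'$, $V(B)$ is anticomplete to $V(G)\setminus U'$,'' which is a welcome clarification rather than a deviation.
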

\begin{proof} Let $B$ be a component of $G[M(C)\setminus Y']$. By the definition of $Y'$, $V(B)$ is anticomplete to $V(G)\setminus U'$. By Lemma \ref{lem-U'-MC}, each vertex of $U'$ is either complete or anticomplete to $V(B)$. If $|V(B)|\ge 2$, then $V(B)$ is a homogeneous set of $G$, contradicting Lemma \ref{homoset}. Therefore, $|V(B)|=1$, which means that $M(C)\setminus Y'$ is a stable set, and thus $N(x)\subseteq U'$ for any $x\in M(C)\setminus Y'$. This proves Lemma~\ref{lem-MC-Y'}. 
\end{proof}

\begin{lemma}\label{lem-U'-com}
	$U'$ is complete to $U\cup V(C)\cup Y'\cup Z$.
\end{lemma}
\begin{proof}
It follows from its definition that $U'$ is complete to $V(C)$.

Suppose that $U'$ is not complete to $Z$. Let $z\in Z$ and $u'\in U'$ be nonadjacent. Choose $a\in N_{M(C)}(u')$, and choose $v_i\in N_C(z)$ and $v_j\in V(C)\setminus N_C(z)$ that are nonadjacent on $C$. Then $\{a,u',v_i,v_j,z\}$ induces a fork. Hence $U'$ is complete to $Z$.

By Lemma~\ref{lem-U'-MC}, to prove that $U'$ is complete to $Y'$, it is enough to prove that it is complete to $Y$. Choose $y\in Y$ and $u'\in U'$ are nonadjacent. Choose $u\in U_i$ with $y\in N_{M(C)}(u)$. Let $a'\in N_{M(C)}(u')$. If $a'\sim u$, then Lemma~\ref{lem-NMC(u)-clique} gives $a'\sim y$, and Lemma~\ref{lem-U'-MC} then gives $u'\sim y$, a contradiction. Thus $a'\not\sim u$. If $u\not\sim u'$, then $\{u,v_{i+1},u',v_{i-1},a'\}$ induces a fork; if $u\sim u'$, then $\{y,u,u',a',v_{i-1}\}$ induces a fork. Therefore $U'$ is complete to $Y$, and hence to $Y'$.

Finally, suppose that $u\in U_i$ and $u'\in U'$ are nonadjacent. Since $u'$ is complete to $Y'$, it is complete to $N_{M(C)}(u)$. Choose $y\in N_{M(C)}(u)$. Then $\{u,y,u',v_{i-1},v_{i+2}\}$ induces a fork. Thus $U'$ is complete to $U$. This completes the proof of Lemma \ref{lem-U'-com}.
\end{proof}

\begin{lemma}\label{lem-Z'-clique-anticomtoU}
$Z'$ is a clique, and is anticomplete to $U$.
\end{lemma}
\begin{proof}
Suppose that $z'\in Z'$ is adjacent to some $u_i\in U_i$, and choose $y_i\in N_{M(C)}(u_i)$. Then $\{y_i,u_i,z',v_{i+2},v_{i-1}\}$ induces a fork. Hence $Z'$ is anticomplete to $U$.

Now suppose that $z_1',z_2'\in Z'$ are nonadjacent. Since $U\neq\emptyset$, choose $u_j\in U_j$ and $y_j\in N_{M(C)}(u_j)$. Then $\{y_j,u_j,v_j,z_1',z_2'\}$ induces a fork. Therefore $Z'$ is a clique. 
\end{proof}

\begin{lemma}\label{lem-Z-ViVi+1}
For every $z\in Z$, there is an $i\in\{1,2,\ldots,n\}$ such that $\{v_i,v_{i+1}\}\subseteq N_C(z)$.
\end{lemma}
\begin{proof}
Let $z\in Z$. If $z$ has a unique neighbor $v_i$ on $C$, then $\{z,v_i,v_{i-1},v_{i+1},v_{i+2}\}$ induces a fork. Hence $|N_C(z)|\geq2$.
	
Suppose that $\{v_i, v_{i+1}\}\not\subseteq N_C(z)$ for every $i$. Without loss of generality, let $v_1\sim z$. 
Then $z\not\sim v_2,v_n$. If $z\not\sim v_3$, then \{$v_3, v_2, v_1, v_n, z$\} induces a fork. Thus $z\sim v_3$. Since $z\not\sim v_4$, we must have $z\sim v_5$; otherwise, $\{v_5,v_4,v_3,v_2,z\}$ induces a fork. If $n=5$, this contradicts $z\not\sim v_n$. We may therefore assume that $n\geq7$. Continuing in this way, $z\sim v_j$ for every $j\in\{3,5,7,\ldots,n-2\}$ and $z\not\sim v_k$ for every $k\in\{2,4,6,\ldots,n-1\}$. In particular, $z\not\sim v_{n-3},v_{n-1}$, and then $\{v_{n-3},v_{n-2},v_{n-1},v_n,z\}$ induces a fork, a contradiction. This proves Lemma~\ref{lem-Z-ViVi+1}.
\end{proof}

The following lemmas show more relations between $U$ and $Z$.

\begin{lemma}\label{lem-ZU-comUiViVi+1}
Let $z\in Z\cap N(U)$. If $N_{U_i}(z)\neq\emptyset$, then $z$ is complete to $U_i\cup\{v_i,v_{i+1}\}$.
\end{lemma}
\begin{proof}
Let $u\in N_{U_i}(z)$. First, we prove that $z$ is complete to $\{v_i,v_{i+1}\}$. Without loss of generality, suppose to its contrary that $z\not\sim v_i$. 
To forbid a fork on $\{v_i,v_{i-1},u,y,z\}$, we have $z\sim v_{i-1}$. 
Then, to forbid a fork on $\{v_{i+1},v_{i-1},u,y,z\}$, we have $z\sim v_{i+1}$. If $z\sim v_j$ where $j\in \{i+2,i+3,...,i-3\}$,
then $\{v_{j},v_{i-1},u,z,y\}$ induces a fork, a contradiction.
By Lemma \ref{lem-Z-ViVi+1}, $N_C(z)=\{v_{i+1},v_{i-2},v_{i-1}\}$, which forces a fork on $\{v_{i-2},z,u,y,v_{i}\}$, a contradiction. Consequently, $z$ is complete to $\{v_i,v_{i+1}\}$.

Now, we prove that $z$ is complete to $U_i$. Let $u_1, u_2\in U_i$ such that $u_1\sim z$ and $u_2\not\sim z$. Let $y_1\in N_{M(C)}(u_1)$, $y_2\in N_{M(C)}(u_2)$. If $z\sim v_3$, then \{$y_1,u_1,z,v_3,v_n$\} induces a fork whenever $z\sim v_n$, and \{$v_n,v_1,u_1,z,v_3$\} induces a fork whenever $z\not\sim v_n$. If $z\not\sim v_3$, then \{$y_2,u_2,v_2,v_3,z$\} induces a fork. All are contradictions. This completes the proof of Lemma \ref{lem-ZU-comUiViVi+1}.
\end{proof}

\begin{lemma}\label{lem-ZU-NCz}
For any $i$ and $z\in Z\cap N(U_i)$, $N_C(z)=N_C(u_i)$, or $N_C(z)=N_C(u_i)\cup \{v_j, v_{j+1}\}$ for some $j\ne i$.
\end{lemma}
\begin{proof}
Let $z\in Z\cap N(U_i)$, $u_i\in U_i$, and $a\in N_{M(C)}(u_i)$. Then, $z$ is complete to $\{v_i, v_{i+1}\}\cup U_i$ by Lemma~\ref{lem-ZU-comUiViVi+1}. If $|N_C(z)|\geq 5$, let $v_{j_1}$, $v_{j_2}\in N_C(z)\setminus N_C(u_i)$ such that $|j_1-j_2|\geq 2$, then \{$a,u_i,z,v_{j_1},v_{j_2}$\} induces a fork, a contradiction. Therefore, $|N_C(z)|\le 4$. If $N_C(z)\subseteq \{v_k,v_{k+1},v_{k+2}\}$ for some $k$, then conclusion follows trivially. If $N_C(z)=\{v_i,v_{i+1}, v_j\}$ where $j\not\in\{i-1,i+2\}$, then \{$u_i,z,v_j,v_{j-1},v_{j+1}$\} induces a fork, a contradiction. Suppose that $N_C(z)=\{v_i,v_{i+1},v_j,v_k\}$. To forbid a fork on $\{a,u_i,z,v_j,v_k\}$, we have $|j-k|= 1$.
This completes the proof of Lemma~\ref{lem-ZU-NCz}.
\end{proof}

As a consequence of Lemmas~\ref{lem-ZU-comUiViVi+1} and \ref{lem-ZU-NCz}, we have the following corollary.
\begin{corollary}\label{co-ZU-NUz}
	Let $z\in Z\cap N(U)$. Then, $N_U(z)\subseteq U_i\cup U_j$ for some $i,j\in\{1,2,..,n\}$.
\end{corollary}
\begin{proof} By Lemmas~\ref{lem-ZU-comUiViVi+1} and \ref{lem-ZU-NCz}, the statement is certainly true if $|N_C(z)|\le 3$. Suppose so that $|N_C(z)|=4$, and let $N_C(z)=N_C(u_i)\cup \{v_j, v_{j+1}\}$ for some $j\not\in \{i-1, i, i+1\}$. If $j\not\in \{i-2, i+2\}$, then $N_U(z)\subseteq U_i\cup U_j$ by Lemma~\ref{lem-ZU-comUiViVi+1}. Otherwise,
we suppose by symmetry that $j=i+2$. If $N_U(z)\setminus(U_i\cup U_{i+2})\neq\emptyset$, then $z$ has a neighbor, say $u'$, in $U_{i+1}$, and so $\{y, u', z, v_i, v_{i+3}\}$ induces a fork for any $y\in N_{M(C)}(u')$. This proves Corollary~\ref{co-ZU-NUz}.
\end{proof}

\begin{lemma}\label{lem-U-NMCuiuj}
Let $u_i\in U_i$ and $u_j\in U_j$ for $1\le i<j\le n$. If $u_i\sim u_j$, then $N_{M(C)}(u_i)=N_{M(C)}(u_j)$.
\end{lemma}
\begin{proof}
If $u_i\sim u_j$ and $y\in N_{M(C)}(u_i)\setminus N_{M(C)}(u_j)$, then either \{$y, u_i, v_i, u_j, v_j$\} (when $j\ne i+1$) or \{$y, u_i, v_i, u_j, v_{j+1}$\} (when $j=i+1$) induces a fork, a contradiction. Similar contradiction happens if $u_i\sim u_j$ and $N_{M(C)}(u_j)\setminus N_{M(C)}(u_i)\ne\emptyset$.
\end{proof}

\begin{lemma}\label{lem-W}
Suppose $U_i\neq \emptyset$ for some $i\in\{1,2,...,n\}$, and let $W=N(U_i\cup M(C))$. Then $\omega_h(W)<\omega_h(G)$ for any positive integral weight function $h$.
\end{lemma}
\begin{proof} In fact, we prove that every maximal clique of $G[W]$ is contained in a larger maximal clique of $G$. Suppose to its contrary that $X\subseteq W$ is a maximal clique of $G$. 
Since $Z'$ is anticomplete to $U$ by Lemma~\ref{lem-Z'-clique-anticomtoU}, we have that $W= \{v_i,v_{i+1}\}\cup (U\setminus U_i) \cup (Z\cap N(U_i))\cup U'$. Note that $U'$ is complete to $V(C)\cup U\cup Z\cup Y'$ by Lemma~\ref{lem-U'-com}.
 
First, we prove that 
\begin{equation}\label{eqa-W1}
    X\cap (Z\cap N(U_i))\ne \emptyset.
\end{equation}
	
Suppose that (\ref{eqa-W1}) does not hold. Then,  $X\subseteq (U\setminus U_i)\cup \{v_i,v_{i+1}\}\cup U'$. If $X\subseteq U_j\cup U'$ for some $j\ne i$, then $X$ is complete to \{$v_j, v_{j+1}$\} where $\{v_j, v_{j+1}\}\not\subseteq W$, a contradiction.
If $X\not\subseteq U_l\cup U'$ for any $l\ne i$, then $X$ is complete to $N_{M(C)}(u)$ where $u\in X$, since for any two distinct integers $j$ and $k$ and any $u_j\in U_j\cap X$ and $u_k\in U_k\cap X$, $\{u_j, u_k\}$ is complete to $N_{M(C)}(u_k)$ by Lemma~\ref{lem-U-NMCuiuj}. Therefore, $X$ must be complete to $N_{M(C)}(u_k)$, a contradiction. 

Consequently, $X\not\subseteq (U\setminus U_i)\cup U'$. It is obvious that $X\not\subseteq \{v_i,v_{i+1}\}\cup U'$ since $\{v_i,v_{i+1}\}\cup U'$ is complete to $U_i$ and $U_i\ne \emptyset$. Suppose that $X\cap(U\setminus U_i)\cup U'\ne \emptyset$ and $X\cap \{v_i,v_{i+1}\}\ne \emptyset$. If $\{v_i,v_{i+1}\}\subseteq X$, then $X\cap U\setminus U_i=\emptyset$ since $u$ is not complete to $\{v_i,v_{i+1}\}$ for any $u\in U\setminus U_i$, and so $X\subseteq \{v_i,v_{i+1}\}\cup U'$, contradicting $X\not\subseteq \{v_i,v_{i+1}\}\cup U'$. So we can suppose by symmetry that $v_i\in X$ and $v_{i+1}\not\in X$. Then $X\subseteq U_{i-1}\cup \{v_i\}\cup U'$ which is complete to $\{v_{i-1}\}$, a contradiction.
This proves (\ref{eqa-W1}).

Next, we prove that 
\begin{equation}\label{eqa-W2}
    X\cap\{v_i,v_{i+1}\}\ne \emptyset.    
\end{equation}
Suppose to its contrary that $X\subseteq (U\setminus U_i)\cup (Z\cap N(U_i))\cup U'$. By Corollary~\ref{co-ZU-NUz}, $X\subseteq U_j\cup Z\cap N(U_i))\cup U'$ for some $j\ne i$. Therefore, $X$ is complete to $\{v_j,v_{j+1}\}\not\subseteq W$ by Lemma~\ref{lem-ZU-comUiViVi+1}, a contradiction. This proves (\ref{eqa-W2}).

Note that $\{v_i,v_{i+1}\}\cup U'\cup (Z\cap N(U_i))$ is complete to $U_i$ by Lemmas~\ref{lem-U'-com} and \ref{lem-ZU-comUiViVi+1}, we have that $X\not\subseteq \{v_i,v_{i+1}\}\cup U'\cup (Z\cap N(U_i))$. Thus, $X\cap (U\setminus U_i)\ne\emptyset$ and so $|X\cap\{v_i,v_{i+1}\}|=1$ by (\ref{eqa-W2}). Suppose by symmetry that $v_i\in X$, $v_{i+1}\not\in X$ and $z\in Z\cap N(U_i)$ by (\ref{eqa-W1}). 
 We have $X\subseteq N_W(z)\subseteq U_{i-1} \cup (Z\cap N(U_i))\cup \{v_{i}\}\cup U'$, and so is complete to $\{v_{i-1}\}$ by Lemma~\ref{lem-ZU-comUiViVi+1}, a contradiction. This completes the proof of Lemma \ref{lem-W}.
\end{proof}

The following conclusion is very useful, particularly, to the proofs of Theorems~\ref{P_7} and \ref{P6cupK1} in Section \ref{sec4}.

\begin{lemma}\label{lem-u-mix}
There exist a vertex $v\in V(G)$, an odd hole $C$ contained in $G[M(v)]$, a component $B$ of $G[M(C)]$, and $u\in U$ such that $u$ is mixed on $V(B)$.
\end{lemma}
\begin{proof}
Let $C$ an odd hole of $G$ with $M(C)\ne\emptyset$. Suppose that for any $v\in M(C)$, any component $B$ of $G[M(C)]$, and any $u\in U$, $u$ is not mixed on $V(B)$. Without loss of generality, we can suppose that $U_1\neq \emptyset$ by Lemma~\ref{2.5}.

By Lemma~\ref{lem-U'-MC}, each vertex of $U'$ is either complete or anticomplete to $V(B)$. If $|V(B)|\ge 2$, then $V(B)$ is a homogeneous set of $G$ as $V(B)$ is anticomplete to $V(G)\setminus(U\cup U')$, a contradiction to Lemma~\ref{homoset}. Therefore, $|V(B)|=1$, and hence
\begin{equation}\label{eqa-stableset-C}
\mbox{ $M(C)$ is a stable set.}
\end{equation}

Since $U_1$ is a clique by (\ref{eqa-Ui-clique}), we have that $G[U_1\cup M(C)]$ is perfect.
If $G[M(U_1\cup M(C))]$ is perfect, let $W=U_1\cup M(C)\cup M(U_1\cup M(C))$, then by Lemma~\ref{lem-W}, $(W, V(G)\setminus W)$ is a perfect weighted division of $G$, contradicting the minimal nonperfect weighted divisibility of $G$. Therefore, $G[M(U_1\cup M(C))]$ is not perfect. By Lemma~\ref{2.3}, we can choose an odd hole $C'$ in $G[M(U_1\cup M(C))]$. But now, $M(C')$ is not a stable set as $U_1\cup M(C)\subseteq M(C')$ and $\omega(U_1\cup M(C))\ge 2$. By  (\ref{eqa-stableset-C}), there must exist a $v'\in M(C')$, a component $B'$ of $G[M(C')]$, and $u'\in U(C')$ such that $u'$ is mixed on $V(B')$. This completes the proof of Lemma \ref{lem-u-mix}.
\end{proof}


\section{Proof of Theorem~ \ref{miniclaw}}\label{sec3}

In this section, we prove Theorem~\ref{miniclaw}. Firstly, we need to prove that MNPWD fork-free graphs must be odd parachute-free. Recall that an odd parachute is obtained from an odd hole $H$ and an edge $uv$ by joining $u$ to all vertices of $H$. 

All the notations $U, U', Y, Y', Z$ and $Z'$ are defined the same as in Section \ref{sec2}. If $G$ is an MNPWD fork-free graph, then by Lemma~\ref{2.5}, for any odd hole $C$ of $G$, we have  that $U\neq \emptyset$. Without loss of generality, we suppose that $U_1\neq \emptyset$.

\begin{lemma}\label{lem-U'-MCstable}
Let $G$ be an MNPWD fork-free graph. Suppose that $G$ contains an odd parachute $F$, and let $C$ be the odd hole of $F$. Then, $M(C)$ is a stable set.
\end{lemma}
\begin{proof} Let $C=v_1v_2...v_nv_1$, and let $v$ be the vertex of degree 1 of $F$. First we have that $M(C)\neq \emptyset$ and $U'\neq \emptyset$.

By Lemma~\ref{lem-MC-Y'}, we have that $M(C)\setminus Y'$ is a stable set, and $N(x)\subseteq U'$ for any $x\in M(C)\setminus Y'$. By Lemma~\ref{lem-U'-com}, $U'$ is complete to $U\cup V(C)\cup Y'\cup Z$. If $Z'=\emptyset$, or if $Z'$ is nonempty but complete to $U'$,  then $V(G)\setminus (U'\cup M(C)\setminus Y')$ is a homogeneous set, contradicting Lemma~\ref{homoset}. Therefore,
\begin{equation}\label{eqa-U'-Z'}
\mbox{$Z'\neq \emptyset$, and $Z'$ is not complete to $U'$.}
\end{equation}

Next, we show that $Y'=Y$ and $|Y|=1$. Then, it follows that $M(C)$ is a stable set since $M(C)\setminus Y'$ is stable by Lemma~\ref{lem-MC-Y'}.

Suppose that $Y'\ne Y$, and let $y'\in Y'\setminus Y$. Let $u_i\in U_i$ for some $i$. It is certain that $u_i\not\sim y'$. By (\ref{eqa-U'-Z'}), there exist $u'\in U'$ and $z'\in Z'$ such that $u'\not\sim z'$. Since $U'$ is complete to $U\cup Y'$  by Lemma~\ref{lem-U'-com}, we have that $u'\sim u_i$ and $u'\sim y'$. Then, \{$z', v_{i+2}, u', u_i, y'$\} induces a fork. Therefore, $Y'= Y$.

If $U$ is not complete to $Y$, let $u\in U_j$ for some $j$ and $y''\in Y$ such that $u\not\sim y''$, then \{$z', v_{j+2}, u', u, y''$\} induces a fork. Therefore, $U$ is  complete to $Y$. Now, $Y$ is complete to $U\cup U'$, and anticomplete to $V(G)\setminus (U\cup U'\cup Y)$. Since $G$ has no homogeneous sets by Lemma~\ref{homoset}, we have that $|Y|=1$. This completes the proof of Lemma~\ref{lem-U'-MCstable}.
\end{proof}

\begin{lemma}\label{lem-MNPD-parachute}
Every MNPWD fork-free graph is odd parachute-free.
\end{lemma}
\begin{proof}
Suppose to its contrary, let $G$ be an MNPWD fork-free graph that contains an odd parachute $F$. Let $C=v_1v_2...v_nv_1$ be the odd hole contained of $F$ such that $n\geq 5$ and $n$ is odd, and $v$ be the vertex of degree 1 of $F$.
\begin{equation}\label{eqa-para-1}
	G[M(U_1\cup M(C))] \mbox{ is perfect}.
\end{equation}
Notice that $M(U_1\cup M(C))\subseteq V(C)\cup Z\cup Z'$. Suppose that $G[M(U_1\cup M(C))]$ is not perfect. By Lemma~\ref{2.3}, we may choose $H$ to be an odd hole contained in $G[M(U_1\cup M(C))]$. Since $Z'$ is a clique by Lemma~\ref{lem-Z'-clique-anticomtoU}, we have that $|V(H)\cap Z'|\leq 2$. Let $u'\in U'$. Since $U'$ is complete to $V(C)\cup U\cup Z\cup Y'$ by  Lemma~\ref{lem-U'-com}, and since
$M(U_1\cup M(C))\subseteq V(C)\cup Z\cup Z'$, we have that $u'$ has at least 3 neighbors in $M(H)$. It follows from Lemma~\ref{2.4} that $u'\in U'(H)$, which $G$ has an odd parachute containing the odd hole $H$. However, $U_1\cup M(C)\subseteq M(H)$, which implies that $M(H)$ cannot be a stable set, contradicting Lemma~\ref{lem-U'-MCstable}. This proves (\ref{eqa-para-1}).

Since $U_1$ is a clique by (\ref{eqa-Ui-clique}), and since $M(C)$ is a stable set by Lemma~\ref{lem-U'-MCstable}, we have that $G[U_1\cup M(C)]$ is perfect. Combining this with (\ref{eqa-para-1}), we have that $G[U_1\cup M(C)\cup M(U_1\cup M(C))]$ is perfect. Let $A=U_1\cup M(C)\cup M(U_1\cup M(C))$ and $B=N(U_1\cup M(C))$. By Lemma~\ref{lem-W}, $A$ and $B$ form a perfect weighted division of $G$ for any positive integer function, a contradiction. This completes the proof of Lemma~\ref{lem-MNPD-parachute}.
\end{proof}

As a consequence of Lemmas~\ref{2.4} and \ref{lem-MNPD-parachute}, we have the following corollary.
\begin{corollary}\label{new2.5}
Let $G$ be an MNPWD fork-free graph, and $C=v_1v_2...v_nv_1$ an odd hole of $G$. Then for any $u\in N(M(C))$, $N_C(u) = \{v_j, v_{j+1}\}$ for some $j\in \{1, 2,\ldots,n\}$.
\end{corollary}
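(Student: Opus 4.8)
The plan is to read off the statement directly from the dichotomy of Lemma~\ref{2.4} and then use Lemma~\ref{minipara2} to discard the degenerate alternative. First I would fix $u\in N(M(C))$. By the definition $N(M(C))=\cup_{x\in M(C)}N(x)\setminus M(C)$, the vertex $u$ lies outside $M(C)$ and has a neighbour $w\in M(C)$. Since $M(C)$ is exactly the set of vertices anticomplete to $V(C)$, the vertex $w$ is anticomplete to $V(C)$, while $u\notin M(C)$ forces $u$ to have at least one neighbour on $C$. Moreover $u\notin V(C)$, because $u$ is adjacent to $w\in M(C)$ whereas no vertex of $V(C)$ has a neighbour in $M(C)$. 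Thus $u$ and $w$ are an adjacent pair of $V(G)\setminus V(C)$ in which $u$ is not anticomplete to $V(C)$ and $w$ is anticomplete to $V(C)$, which is precisely the hypothesis of Lemma~\ref{2.4}.

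Applying Lemma~\ref{2.4} then gives the dichotomy $N_C(u)=\{v_j,v_{j+1}\}$ for some $j$, or $N_C(u)=V(C)$, and it remains only to rule out the second case. Suppose $N_C(u)=V(C)$, so that $u$ is complete to $V(C)$. Because $w$ is anticomplete to $V(C)$ and $u\sim w$, the set $V(C)\cup\{u,w\}$ induces a parachute whose hole is $C$, with center $u$ and degree-one vertex $w$; as $C$ is an odd hole, this is an odd parachute. This contradicts Lemma~\ref{minipara2}, which states that a minimal nonperfectly divisible fork-free graph is odd parachute-free. Hence $N_C(u)=\{v_j,v_{j+1}\}$ for some $j\in\{1,2,\ldots,n\}$, as required.

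I do not anticipate a genuine obstacle here, since the corollary is essentially Lemma~\ref{2.4} with its complete-neighbourhood outcome excised. The only point that needs care is the bookkeeping in the first paragraph: membership of $u$ in $N(M(C))$ must be shown to supply \emph{both} a neighbour of $u$ on $C$ (so that $u$ is not anticomplete to $V(C)$, making Lemma~\ref{2.4} applicable and placing $u$ outside $V(C)$) \emph{and} the anticomplete witness $w\in M(C)$ (so that the case $N_C(u)=V(C)$ really does produce an induced odd parachute rather than merely a vertex complete to $C$). Once these two roles are disentangled, the conclusion is immediate.
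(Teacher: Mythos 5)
Your proof is correct and follows exactly the route the paper intends: the paper states this corollary as an immediate consequence of Lemmas~\ref{2.4} and \ref{minipara2}, and your argument---apply the dichotomy of Lemma~\ref{2.4} to the pair $u,w$ and eliminate the case $N_C(u)=V(C)$ because $V(C)\cup\{u,w\}$ would then induce an odd parachute, contradicting Lemma~\ref{minipara2}---is precisely that deduction, with the bookkeeping (why $u\notin V(C)$ and why $u$ has a neighbour on $C$) carefully spelled out.
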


Recall that for each odd hole $C$ of $G$, we have associated sets $U, U', Y, Y', Z$ and $Z'$ as defined in Section 2.  We call a vertex a {\em claw center} if it is the center of an induced claw.

\begin{lemma}\label{lem-claw}
	Let $G$ be an MNPWD fork-free graph. Suppose that $G$ contains a claw with center $v$. Then, $G[M(v)]$ is perfect.
\end{lemma}
\begin{proof}
	Suppose to its contrary that $G$ contains a claw with center $v$ and $G[M(v)]$ is not perfect. By Lemma~\ref{2.3}, we may choose $C=v_1v_2...v_nv_1$ to be an odd hole of $G[M(v)]$.
By Corollary~\ref{new2.5}, $U'=\emptyset$, and thus $N(M(C))=U$.

\begin{claim}\label{claw1}
For any $y\in Y$, $y$ is not a claw center.
\end{claim}
\begin{proof}
	Suppose to its contrary. Let $\{a_1,a_2,a_3,y\}$ be an induced claw of $G$ with center $y\in Y$. Then, $\{a_1,a_2,a_3\}\subseteq M(C)\cup U$.
	
	First, suppose that \{$a_1,a_2,a_3\}\subseteq M(C)$. Let $u\in U_i$ be a neighbor of $y$ for some $i\in \{1,2,...,n\}$. Since $N_{M(C)}(u)$ is a clique by Lemma~\ref{lem-NMC(u)-clique}, we have that $u$ has at most one neighbor in \{$a_1,a_2,a_3$\}. Without loss of generality, suppose that $u\not\sim a_1$ and $u\not\sim a_2$. Then \{$v_i,u,y,a_1,a_2$\} induces a fork, a contradiction.
	
	Next, suppose that \{$a_1,a_2$\}$\subseteq M(C)$ and $a_3\in U$. Let $v_i\in  N_C(a_3)$. Then \{$v_i,a_3,y,a_1,a_2$\} induces a fork, a contradiction.
	
	Thirdly, suppose that $a_1\in M(C)$ and $\{a_2,a_3\}\subseteq U$. Without loss of generality, let $a_2\in U_1$. Then $a_3$ is complete to \{$v_1,v_2$\} to forbid a fork on \{$v_1,a_2,y,a_1,a_3$\} or \{$v_2,a_2,y,a_1,a_3$\}, which forces an induced fork on \{$v_4,v_3,v_2,a_2,a_3$\}, a contradiction.
	
	Fourthly, suppose that $\{a_1,a_2,a_3\}\subseteq U$. Without loss of generality, suppose that $a_1\in U_1$. Then for $i\in\{1, 2\}$, $v_i\sim a_2$ or $v_i\sim a_3$ to forbid a fork on \{$v_i,a_1,y,a_2,a_3$\}. Without loss of generality, let $v_1\sim a_2$. If $v_2\sim a_2$, then \{$v_4,v_3,v_2,a_1,a_2$\} induces a fork, a contradiction. Therefore, $v_2\not\sim a_2$ and $v_2\sim a_3$. By Lemma~\ref{2.4}, we have that $N_C(a_2)=\{v_1, v_n\}$ and $N_C(a_3)\in \{\{v_1, v_2\}, \{v_2, v_3\}\}$. If $N_C(a_3)=\{v_1,v_2\}$ then \{$v_4,v_3,v_2,a_1,a_3$\} induces a fork. If $N_C(a_3)=\{v_2,v_3\}$  then  \{$v_3,a_3,y,a_1,a_2$\} induces a fork. Both are contradictions. This completes the proof of Claim \ref{claw1}.
\end{proof}
\begin{claim}\label{claw3}
	For any $y\in M(C)$, $y$ is not a claw center.
\end{claim}
\begin{proof}
	Let $Y_0=U$, and let $Y_i=N_{M(C)}(Y_{i-1})\setminus \cup_{j=0}^{i-2}Y_j$ for $i=1,2,...$. Then, $M(C)=\cup_{i\geq 1}Y_i$ since $G$ is connected and $U'=\emptyset$, and for any $i\geq 1$ and $y\in Y_i$, $N(y)\subseteq Y_{i-1}\cup Y_i\cup Y_{i+1}$. To prove Claim \ref{claw3}, we will show by induction that for any $i\geq 1$ and $y\in Y_i$, $y$ is not a claw center. The statement holds when $i=1$ by Claim~\ref{claw1}. Now, let $i\geq 1$, and suppose that $y\in Y_{i+1}$ is the center of a claw on $\{a_1, a_2, a_3, y\}$.

First we consider the case that $\{a_1,a_2,a_3\}\cap Y_i\neq \emptyset$. Without loss of generality, let $a_1\in Y_i$ and $y_{i-1}\in N_{Y_{i-1}}(a_1)$.  If $i=1$, then by Lemma~\ref{lem-NMC(u)-clique}, we have that $y_{i-1}\not\sim a_2$ and $y_{i-1}\not\sim a_3$, which force an induced fork on \{$y_{i-1},a_1,y,a_2,a_3$\}, a contradiction.
If $i\geq 2$, let $a\in N_{Y_{i-1}}(y_{i-1})$, then  $y_{i-1}\not\sim a_2$ and $y_{i-1}\not\sim a_3$ since $y_{i-1}$ is not a claw center, which force an induced fork on \{$y_{i-1},a_1,y,a_2,a_3$\}, a contradiction.

Therefore, $\{a_1,a_2,a_3\}\cap Y_i=\emptyset$, and so $\{a_1,a_2,a_3\}\subseteq Y_{i+1}\cup Y_{i+2}$. Let $y_{i}\in N_{Y_{i}}(y)$ and  $y_{i-1}\in N_{Y_{i-1}}(y_{i})$. Since $y_{i-1}$ is anticomplete to $Y_{i+1}\cup Y_{i+2}$ and $y_i$ is not a claw center, we have that $|N_{\{a_1,a_2,a_3\}}(y_i)|\leq 1$. Without loss of generality, suppose that $y_i\not\sim a_2$ and $y_i\not\sim a_3$. Then, \{$y_{i-1},y_i,y_{i+1},a_2,a_3$\} induces an induced fork, a contradiction. This completes the proof of Claim \ref{claw3}.
\end{proof}

It follows from Claim~\ref{claw3} that $v$ cannot be a claw center since $v\in M(C)$, which contradicts our assumption. Therefore, $G[M(v)]$ must be perfect if $v$ is a claw center. This completes the proof of Lemma~\ref{lem-claw}.
\end{proof}

\medskip

Now, we can prove Theorem~\ref{miniclaw}.

\noindent\textbf{Proof of Theorem~\ref{miniclaw}}: Let $G$ be an MNPWD fork-free graph. Suppose that $G$ contains a claw with center $v$. Then $G[M(v)]$ is perfect by Lemma~\ref{lem-claw}, which contradicts Lemma~\ref{Mv}. This completes the proof of Theorem~\ref{miniclaw}.\qed

\section{Proof of Theorems~\ref{P_7} and \ref{P6cupK1}}\label{sec4}

In this section, we prove that (fork, $P_7$)-free graphs and (fork, $P_6\cup K_1$)-free graphs are perfectly weight divisible. All the notations $U_i, U, U', Y, Y', Z$ and $Z'$ are defined the same as in Section \ref{sec2}, which are all associated with some odd hole.

\medskip

\noindent\textbf{Proof of Theorem~\ref{P_7}}: Let $G$ be an MNPWD (fork, $P_7$)-free graph, $v\in V(G)$. By Lemma~\ref{Mv}, $G[M(v)]$ is not perfect, and thus contains an odd hole by Lemma~\ref{2.3}, say $C=v_1v_2\ldots v_nv_1$.

Let $u\in U_i$ for some $i\in\{1,2,...,n\}$. Then for any $a\in N_{M(C)}(u)$ and $a'\in N_{M(C)}(a)$, $u\sim a'$ to forbid an induced $P_7=a'auv_{i+1}v_{i+2}v_{i+3}v_{i+4}$. Therefore, 
for any $u\in U$ and any component $B$ of $G[M(C)]$, $u$ is not mixed on $V(B)$, which contradicts Lemma~\ref{lem-u-mix}. This completes the proof of Theorem~\ref{P_7}. \qed

\medskip

\noindent\textbf{Proof of Theorem \ref{P6cupK1}}: Let $G$ be an MNPWD (fork, $P_6\cup K_1$)-free graph, $v\in V(G)$. By Lemma~\ref{Mv}, $G[M(v)]$ is not perfect, and thus contains an odd hole by Lemma~\ref{2.3}, say $C=v_1v_2\ldots v_nv_1$. 

We show first that 
\begin{equation}\label{eqa-MC-Clique}
\mbox{$M(C)$ is a clique.}
\end{equation}

Let $X=Y\cup (N(Y)\cap M(C))$, and let $y_1, y_2\in X$ such that $y_1\in Y$. Without loss of generality, let $u_1\sim y_1$. If $y_1\not\sim y_2$, then $u_1\not\sim y_2$ by Lemma~\ref{lem-NMC(u)-clique}, and thus $\{y_1, u, v_2, v_3, v_4, v_5, y_2\}$ induces a $P_6\cup K_1$, a contradiction. This shows that $Y$ is a clique and is complete to $N(Y)\cap M(C)$. If $N(Y)\cap M(C)$ is not a clique, let $z_1, z_2\in N(Y)\cap M(C)$ such that $z_1\not\sim z_2$, and choose $y\in Y$ with a neighbor $u\in U$, then $y$ is the center of a claw on $\{u, y, z_1, z_2\}$, a contradiction to Theorem~\ref{miniclaw}. Therefore, $X$ is a clique.

Let $Q$ be a maximum clique of $G[M(C)]$ such that $X\subseteq Q$, and suppose, without loss of generality, that $Q\ne M(C)$. Since $G$ is connected, and since $U'=\emptyset$ by Lemma~\ref{lem-MNPD-parachute}, we may choose $z\in M(C)\setminus Q$, $z_1\in Y$ and $z_2\in Q$ such that $z\not\sim z_1$ and $z\sim z_2$. Let $u_i\in U_i$ be a neighbor of $z_1$. Then, $\{z_1, u_i, v_{i+1}, v_{i+2}, v_{i+3}, v_{i+4}, z\}$ induces a $P_6\cup K_1$, a contradiction. Therefore, $Q=M(C)$, and (\ref{eqa-MC-Clique}) holds.

By Lemma~\ref{lem-u-mix}, we may choose a vertex, say $v_0$, of $G$, an odd hole $C_0$ contained in $G[M(v_0)]$, and some $u\in U_i(C_0)$ such that $u$ is mixed on $M(C_0)$ (notice here $M(C_0)$ is a clique by (\ref{eqa-MC-Clique})). Then, $U_i(C_0)\cup M(C_0)$ is not a clique. If $G[M(U_i(C_0)\cup M(C_0))]$ is not perfect, then it has an odd hole $C'$ such that $U_i(C_0)\cup M(C_0)\subseteq M(C')$, which contradicts (\ref{eqa-MC-Clique}). Therefore, $G[M(U_i(C_0)\cup M(C_0))]$ is perfect. Since $U_i(C_0)$ is a clique by (\ref{eqa-Ui-clique}), and since $M(C_0)$ is a clique by (\ref{eqa-MC-Clique}), we have that $U_i(C_0)\cup M(C_0)$ is perfect.  Let $W=N(U_i(C_0)\cup M(C_0))$. By Lemma~\ref{lem-W}, 
$(W, V(G)\setminus W)$ is a perfect weighted division of $G$ for any positive integer function, a contradiction to the choice of $G$. This completes the proof of Theorem~\ref{P6cupK1}.\qed

{\small }

\end{document}